\definecolor{forestgreen}{rgb}{0.1333,0.5451,0.1333}
\definecolor{navyblue}{rgb}{0,0,0.5}
\definecolor{darkgreen}{rgb}{0,0.3922,0}
\let\reftagform@=\tagform@
\def\tagform@#1{\maketag@@@{(\ignorespaces\textcolor{black}{#1}\unskip\@@italiccorr)}}
\renewcommand{\eqref}[1]{\textup{\reftagform@{\ref{#1}}}}
\def\lessim{\ \lower4pt\hbox{$
		\buildrel{\displaystyle <}\over\sim$}\ }
\def\gessim{\ \lower4pt\hbox{$\buildrel{\displaystyle >}
		\over\sim$}\ }
\def\si{\sigma}
\newcommand{\pref}{\prettyref}
\newtheorem{lemma}{\bf Lemma}[section]
\newtheorem{theorem}[lemma]{\bf Theorem}
\newtheorem{example}[lemma]{\bf Example}
\newtheorem*{acknowledgements}{\bf Acknowledgements}
\theoremstyle{remark}
\newtheorem{remark}{Remark}[section]
\newtheorem{assumption}{\bf Assumption}
\numberwithin{equation}{section}
\newcommand{\8}{\infty}
\newcommand{\dx}{\mathcal{D}}
\newcommand{\nz}{\mathbb{N}}
\newcommand{\rz}{\mathbb{R}}
\newcommand{\ez}{\mathbb{E}}
\newcommand{\sfT}{\mathsf T}
\newcommand{\al}{\alpha}
\renewcommand{\si}{\sigma}
\newcommand{\la}{\lambda}
\newcommand{\Crt}{\mathrm{Crt}}
\newcommand{\Var}{\mathrm{Var}}
\newcommand{\Cov}{\mathrm{Cov}}
\newcommand{\GOE}{\mathrm{GOE}}
\newcommand{\dd}{\mathrm{d}}
\begin{document}

\title{On the expected number of critical points of \\ locally isotropic Gaussian random fields }
\author{Hao Xu\thanks{Department of Mathematics, University of Macau, yc17446@connect.um.edu.mo.} \and Haoran Yang\thanks{School of Mathematical Sciences, Peking University, yanghr@pku.edu.cn.} \and Qiang Zeng\thanks{Department of Mathematics, University of Macau, qzeng.math@gmail.com, research partially supported by SRG 2020-00029-FST and FDCT 0132/2020/A3.}}

\maketitle

\abstract
We consider locally isotropic Gaussian random fields on the $N$-dimensional Euclidean space for fixed $N$. Using the so called Gaussian Orthogonally Invariant matrices first studied by Mallows in 1961 which include the celebrated Gaussian Orthogonal Ensemble (GOE), we establish the Kac--Rice representation of expected number of critical points of non-isotropic Gaussian fields, complementing the isotropic case obtained by Cheng and Schwartzman in 2018. In the limit $N=\8$, we show that such a representation can be always given by GOE matrices, as conjectured by Auffinger and Zeng in 2020.

\section{Introduction}

Locally isotropic random fields on the $N$-dimensional Euclidean space $\rz^N$ were introduced by Kolmogorov in 1941 \cite{Ko41} for the application in statistical theory of turbulence. Since then, this class of stochastic processes has been extensively studied in both physics and mathematics. In particular, locally isotropic Gaussian random fields have been used to model a particle confined in a random potential and serve as a toy model for the elastic manifold. For an incomplete list of literature, we refer the interested reader to \cite{MP91,En93,Fy04,FS07,FB08,FN12,FLD18,FLD20} for the background in physics and \cite{Kli12,AZ20,AZ22,BBMsd,BBMsd2,XZ22} for the mathematical development.

The goal of this paper is two-fold. For application to statistical physics, frequently we need to send $N\to\8$ for the thermodynamic limit, which puts additional restrictions on the class of such Gaussian fields. Using the Kac–Rice formula and the connection to random matrices, in the seminal work \cite{Fy04} Fyodorov considered the expected number of critical points of isotropic Gaussian random fields on $\rz^N$ in the asymptotic regime $N\to\8$. This is commonly known as landscape complexity (or complexity for simplicity) in statistical physics; see also \cite{FN12,Fy15} for related topics. Recently, Auffinger and Zeng provided a detailed study on the complexity of non-isotropic Gaussian random fields with isotropic increments \cite{AZ20,AZ22}. On the other hand, for applications to statistics and other fields, it is also of interest to consider random fields on a finite-dimensional Euclidean space. Guided by this principle, Cheng and Schwartzman gave a representation of the expected number of critical points of isotropic Gaussian random fields on $\rz^N$ with fixed $N$ {\cite[Theorem 3.5]{CS18}}. Here an apparent gap arises: What about the non-isotropic Gaussian fields on the fixed $\rz^N$? We provide several answers to this question. Furthermore, we show that a technical assumption used in \cite{AZ20} is redundant in the large $N$ limit as conjectured by the authors while it is useful to obtain a representation via matrices from the Gaussian Orthogonal Ensemble (GOE) in finite dimensions.

Let us be more precise. A locally isotropic Gaussian random field  $H_N=\left\{H_N(x): x \in \mathbb{R}^N\right\}$ is a centered Gaussian process indexed by $\rz^N$ that satisfies
\begin{align}\label{eq:def}
    \mathbb{E}\left[\left(H_{N}(x)-H_{N}(y)\right)^{2}\right]= D_N\left(\|x-y\|^{2}\right), \quad x, y \in \mathbb{R}^{N}.
\end{align}
Here the function $D_N \colon \mathbb{R}_{+} \to \mathbb{R}_{+}$ is called the structure function of $H_N$, $\|\cdot\|$ is the Euclidean norm and $\mathbb{R}_{+}=[0, \infty)$. The process $H_N$ is also known as a Gaussian random field with isotropic increments. The condition \eqref{eq:def} determines the law of $H_{N}$ up to an additive shift by a Gaussian random variable. Following \cite{Ya87}, we recall some basic properties of the structure function. Let $\mathcal{D}_N$ denote the set of all $N$-dimensional structure functions and   $\mathcal{D}_{\infty}$ the set of structure functions which belong to $\mathcal{D}_N$ for all $N\in\nz$. Since any $N$-dimensional structure function is necessarily an $M$-dimensional structure function for all integers $M$ less than $N$, it is clear that
$$
\mathcal{D}_1 \supset \mathcal{D}_2 \supset \dots \supset \mathcal{D}_N \supset \dots \supset \mathcal{D}_{\infty},
$$
where the symbol $\supset$ denotes inclusion. In the following, we write $D_\8\in\mathcal{D}_{\infty}$ for the structure function of a field that can be defined on $\rz^N$ for all natural numbers $N\in\nz$ and in this case we frequently write $N=\8$.
Let us define
\begin{align*}
\Lambda_N(x) =
\begin{cases}
    \cos x,& N=1,\\
    2^{(N-2) / 2} \Gamma\left(\frac{N}{2}\right) \frac{J_{(N-2) / 2}(x)}{x^{(N-2) / 2}} =1-\frac{x^2}{2 N}+\frac{x^4}{2 \cdot 4 \cdot N(N+2)}-\cdots,& N=2,3, \dots, \\
    e^{-x^2}, & N=\infty.
\end{cases}
\end{align*}
Here $J_N$ is the $N$th Bessel function of the first kind, which has the following representation
$$
J_N(x)=\sum_{m=0}^{\infty} \frac{(-1)^m}{m ! \Gamma(m+N+1)}\left(\frac{x}{2}\right)^{2 m+N}.
$$
From here it can be shown that $\Lambda_N(\sqrt{2N}x)\stackrel{N \rightarrow \infty}{\longrightarrow} e^{-x^2}$.
By \cite{Sch38,Ya57}, locally isotropic Gaussian random fields (or equivalently the class $\dx_N$) can be classified into two cases:

\begin{enumerate}
    \item Isotropic fields. There exists a function $B_N \colon \mathbb{R}_{+} \to \mathbb{R}$ such that
\begin{align}\label{Hcov}
\mathbb{E}\left[H_{N}(x) H_{N}(y)\right]= B_N\left(\|x-y\|^{2}\right),
\end{align}
where $B_N$ has the representation
\begin{align*}
B_N(r)=C_{N}+\int_{(0, \infty)} \Lambda_N(\sqrt{rt}) \nu_N(\mathrm{d} t),
\end{align*}
with a constant $C_{N} \in \mathbb{R}_{+}$ and a finite measure $\nu_N$ on $(0, \infty)$. Clearly, in this case we have $D_N(r)=2(B_N(0)-B_N(r))$. In particular, for the class $\dx_\8$, we write $B(r)=B_\8(r)$ and it can be represented as
$$
B(r) = C + \int_{(0, \infty)} e^{-r t} \nu (\mathrm{d} t).
$$

\item Non-isotropic fields with isotropic increments. The structure function $D_N$ can be written as
\begin{align} \label{DNr}
D_N(r) = \int_{(0, \infty)} \left( 1 - \Lambda_N ( \sqrt{rt} ) \right) \nu_N (\mathrm{d} t) + A_N r,
\end{align}
where $A_N \in \mathbb{R}_{+}$ is a constant and $\nu_N$ is a $\sigma$-finite measure with
$$
\int_{(0, \infty)} \frac{t}{1+t} \nu_N(\mathrm{d} t)<\infty.
$$
For $N=\infty$, the structure function $D(r):=D_\8(r)$
has the following form
\begin{align}\label{Dr}
D(r)=\int_{(0, \infty)}\left(1-e^{-rt}\right) \nu(\mathrm{d} t)+A r,
\end{align}
which is, in the language of Theorem \ref{Bern} below, a Bernstein function with $D(0)=0$.
\end{enumerate}

These representations provide us some information on the sign of derivatives provided they are finite.
For instance, \eqref{Dr} implies that $D'(r)\geq 0$ and $D''(r)\leq 0$ for $r\geq 0$. However, due to the oscillatory nature of Bessel functions, we only have $D_N'(0)\geq 0$ and $D_N''(0)\leq 0$ for $N\in\nz$.  

Let $E \subset \mathbb{R}$ and $T_N\subset\mathbb{R}^N$ be Borel subsets. If the random field is twice differentiable, we define
\begin{align*}
\operatorname{Crt}_N\left(E, T_N\right) & =\#\left\{x \in T_N: \nabla H_N(x)=0,  H_N(x) \in E\right\}, \\
\operatorname{Crt}_{N, k}\left(E, T_N\right) & =\#\left\{x \in T_N: \nabla H_N(x)=0, \,  H_N(x) \in E, \, i\left(\nabla^2 H_N(x)\right)=k\right\}, \quad k = 0, 1, \dots, N,
\end{align*}
where $i (\nabla^2 H_N(x) )$ is the index (or the number of negative eigenvalues) of the Hessian $\nabla^2 H_N(x)$. Under some suitable smoothness conditions in \cite{AT07}, Cheng and Schwartzman gave a representation of $\ez[\Crt_{N,k}([u,\8),T)]$ for the \emph{isotropic fields} using the Kac--Rice formula in {\cite[Theorem 3.5]{CS18}}, where $u\in[-\8,\8)$ and $T$ is the unit-volume ball in $\mathbb{R}^N$. A key ingredient in the representation is what the authors call the Gaussian Orthogonally Invariant (GOI) matrix, which was first studied by Mallows \cite{Ma61}. Following their terminology, an $N \times N$ real symmetric random matrix $M$ is said to be Gaussian Orthogonally Invariant (GOI) with covariance parameter $c$, denoted by $\operatorname{GOI}(c)$, if its entries $M_{i j}, 1\leq i,j\leq N$ are centered Gaussian random variables such that
$$
\mathbb{E}\left[M_{i j} M_{k l}\right]=\frac{1}{2}\left(\delta_{i k} \delta_{j l}+\delta_{i l} \delta_{j k}\right)+c \delta_{i j} \delta_{k l}.
$$
Note that the distribution of such matrices is invariant under the congruence transformation by any orthogonal matrix. Therefore, they share some properties of the GOE matrices.
In particular, for $c=0$, GOI$(0)$ matrices are exactly GOE matrices. Recall that a Gaussian vector is nondegenerate if and only if the covariance matrix of the Gaussian vector is positive definite. Cheng and Schwartzman \cite{CS18} showed that a GOI$(c)$ matrix of size $N\times N$ is nondegenerate if and only if $c>-\frac{1}{N}$. Moreover, for this nondegenerate GOI$(c)$ matrix, they derived that the density of the ordered eigenvalues is given by
\begin{align}\label{eq:goidens}
f_c\left(\lambda_1, \ldots, \lambda_N\right)= & \frac{1}{K_N \sqrt{1+N c}} \exp \left\{-\frac{1}{2} \sum_{i=1}^N \lambda_i^2+\frac{c}{2(1+N c)}\left(\sum_{i=1}^N \lambda_i\right)^2\right\}\nonumber \\
& \times \prod_{1 \leq i<j \leq N}\left|\lambda_i-\lambda_j\right| \mathbf{1}{\left\{\lambda_1 \leq \cdots \leq \lambda_N\right\}},
\end{align}
where $K_N=2^{N / 2} \prod_{i=1}^N \Gamma\left(\frac{i}{2}\right)$ is the normalization constant. For $c\neq0$, one may write a GOI$(c)$ matrix as a GOE matrix plus a random scalar matrix. If $c>0$ the scalar matrix is independent of the GOE matrix,  while if $c<0$ the scalar matrix is no longer independent. In the limit $N\to\8$ or equivalently for structure functions $D\in\dx_\8$, the covariance parameter $c$ is positive and thus for these structure functions and all dimensions $N\in\nz$, the Kac--Rice representation of $\ez[\Crt_{N,k}(E,T_N)]$ can always be given by GOE matrices. This was the setting in the pioneering works of Fyodorov and Nadal \cite{Fy04, FN12} where the GOE matrices are crucial for asymptotic analysis. However, if one insists on considering structure functions from $\dx_N$ for fixed $N$, then additional restriction is needed in order to get a GOE matrix (plus an independent scalar matrix) in the Kac--Rice representation.

Before illustrating similar problems for the non-isotropic Gaussian fields, let us first put a remark on the relationship between the two cases. In general, the two types of Gaussian random fields are quite different, like the Ornstein--Uhlenbeck process and Brownian motion in dimension $1$. But if $\nu_N$ is a finite measure in the second case, the non-isotropic field is essentially a shifted isotropic field. Indeed, let $H_N(x)$ be an isotropic field that satisfies  \begin{equation*}
	\mathbb{E}\left[H_{N}(x) H_{N}(y)\right]=\frac{1}{2}\int_{(0, \infty)} \Lambda_N \left( \sqrt{\|x-y\|^{2}t} \right) \nu_N(\mathrm{d} t).
\end{equation*}
Then we can verify that $\widehat{H}_N(x):=H_N(x)-H_N(0)$ satisfies
\begin{equation}\label{eq:hnxy}
	\mathbb{E}\left[\left(\widehat{H}_{N}(x)-\widehat{H}_{N}(y)\right)^{2}\right]= \int_{(0, \infty)}\left( 1 - \Lambda_N \left( \sqrt{\|x-y\|^{2}t} \right) \right) \nu_N(\mathrm{d} t),
\end{equation}
and $\widehat{H}_N(0)=0$, which means that $\widehat{H}_N(x)$ is a non-isotropic Gaussian  random field with isotropic increments. Conversely, let $\widehat{H}_N(x)$ be a non-isotropic field satisfying \eqref{eq:hnxy} with some finite measure $\nu_N$. Define
$H_N(x)=\widehat{H}_N(x)+H_N(0)$, where
$H_N(0)$ is a centered Gaussian random variable with variance $\ez[H_N(0)^2]=\frac{1}{2}|\nu_N|$ and
$$\mathbb{E}\left[\widehat{H}_{N}(x) H_{N}(0)\right]=\frac{1}{2}\int_{(0, \infty)} \left( \Lambda_N \left( \sqrt{\|x\|^{2}t} \right) - 1 \right) \nu_N(\mathrm{d} t).$$
One can check that $H_N(x)$
is an isotropic Gaussian random field. Due to the relation $H_N(x)-H_N(0)=\widehat{H}_N(x)$, the non-isotropic field $\widehat{H}_N (x)$ has the same critical points and landscape as those of the isotropic field $H_N$.
Furthermore, let
$\widetilde{H}_N(x)$ be a non-isotropic Gaussian random field satisfying
\begin{equation*}
\mathbb{E}\left[\left(\widetilde{H}_{N}(x)-\widetilde{H}_{N}(y)\right)^{2}\right]= \int_{(0, \infty)}\left( 1 - \Lambda_N \left( \sqrt{\|x-y\|^{2}t} \right) \right) \nu_N(\mathrm{d} t)+A_N \|x-y\|^{2},
\end{equation*}
with some finite measure $\nu_N$ and constant $A_N >0$. In this case, we can split $\widetilde{H}_{N}(x)$ into two independent non-isotropic parts $\widetilde{H}_{N}(x)=\widehat{H}_{N}(x)+\overline{H}_{N}(x)$, where
$\widehat{H}_{N} (x)$ satisfies
\eqref{eq:hnxy} and $\overline{H}_{N}(x)$ satisfies
\begin{equation*}
\mathbb{E}\left[\left(\overline{H}_{N}(x)-\overline{H}_{N}(y)\right)^{2}\right]=A_N \|x-y\|^{2}.
\end{equation*}
Note that we may write $\overline{H}_{N}(x)= \langle x,\xi\rangle$ for a centered Gaussian random vector $\xi=(\xi_1,\xi_2,\dots,\xi_N)$ with covariance matrix $A_N\mathbf{I}_N$, where $\langle \cdot,\cdot\rangle$ is the Euclidean inner product and $\mathbf{I}_{N}$ denotes the $N\times N$ identity matrix hereafter. In this case, the field $\overline{H}_N$ is almost trivial since its gradient is a fixed random vector and we have $\nabla\widetilde{H}_{N}(x)=0$ if and only if  $\nabla \widehat{H}_{N}(x)=-\xi$. Therefore, when we talk about the non-isotropic fields, we may assume that $\lim_{r\to\8} D(r)=\8$ and the measure $\nu_N$ is $\sigma$-finite but not finite.

For the non-isotropic fields with isotropic increments, it is expected that the above representation problem is more challenging since the distribution of the field $H_N(x)$ depends on the location variable $x\in \rz^N$.
Recently, Auffinger and Zeng considered the Kac--Rice representation of $\ez[\operatorname{Crt}_{N, k}\left(E, T_N\right)]$ for the non-isotropic Gaussian fields with isotropic increments in the limit $N\to\8$ during their study of the landscape complexity in \cite{AZ20,AZ22}, where GOE matrices are indispensable in the analysis as the isotropic case. This left the representation problem open for the fixed finite dimensions. Moreover, in order to utilize GOE matrices, they assumed a technical condition (see Assumption \ref{assumption3} below) and verified it for some special cases or subclass of $\dx_\8$. They conjectured that this condition always holds for all structure functions belonging to $\dx_\8$.

This paper aims to resolve these issues for the non-isotropic Gaussian fields with isotropic increments. In Section \ref{sec2}, we prove the main results of this paper, which display various representations of $\ez[\operatorname{Crt}_{N, k}\left(E, T_N\right)]$. The basic tool is the Kac--Rice formula as usual. In general, the representation can be obtained by using GOI$(c)$ matrices (see Theorem \ref{Shell}). 
Furthermore, we may reduce the GOI$(c)$ matrices to GOE matrices in the representation, in the special case of $E=\rz$ (see Theorem \ref{ER}) or with Assumption \ref{assumption3} (see Theorem \ref{shell}). 
These results can be regarded as the non-isotropic analog of those for the isotropic Gaussian fields in \cite{CS18}. In Section \ref{sec3}, we show that for the structure functions in $\dx_\8$, $\ez [ \operatorname{Crt}_{N, k} ( E, T_N ) ]$ can always be represented by using GOE matrices when $T_N$ is a shell domain. In other words, the aforementioned technical condition is redundant as conjectured in \cite{AZ20}.

\section{Representations for non-isotropic Gaussian fields on $\rz^N$} \label{sec2}

\subsection{A perturbed GOI matrix model}

For clarity of exposition, we introduce a random matrix model which is a special perturbation of the GOI model. We call an $N\times N$ real symmetric random matrix $M$ Spiked Gaussian Orthogonally Invariant (SGOI) with parameters $d_1,d_2$ and $d_3$, denoted by $\operatorname{SGOI}(d_1,d_2,d_3)$, if its entries $M_{i j}, 1\leq i,j\leq N$ are centered Gaussian random variables such that
\begin{align}\label{copgoi}
\mathbb{E}\left[M_{i j} M_{k l}\right]=\frac{1}{2}\left(\delta_{i k} \delta_{j l}+\delta_{i l} \delta_{j k}\right)+d_1 \delta_{i j} \delta_{k l}+d_2(\delta_{i 1}\delta_{ j1}\delta_{k l}+\delta_{k1}\delta_{l1}\delta_{i j})+d_3\delta_{i 1}\delta_{ j1}\delta_{k1}\delta_{l1}.
\end{align}
Clearly, if $d_2=0$ and $d_3=0$, the $\operatorname{SGOI}(d_1,0,0)$ matrix is exactly a $\operatorname{GOI}(c)$ matrix with $c=d_1$.
\begin{lemma}\label{NonPGOI}
Let $M$ be an $N\times N$ $\operatorname{SGOI}(d_1,d_2,d_3)$ matrix. Then $M$ is nondegenerate if and only if $d_1>-\frac{1}{N-1}$ and $1+d_3+\frac{d_1+2d_2-(N-1)d_2^2}{1+(N-1)d_1}>0$.
\end{lemma}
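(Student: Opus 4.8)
The plan is to reduce the nondegeneracy question to positive definiteness of a concrete covariance matrix, exploit the $O(N-1)$-symmetry that fixes the first coordinate direction, block-diagonalize, and then compute two scalar determinant conditions. Write the $\binom{N+1}{2}$-dimensional Gaussian vector of entries $(M_{ij})_{i\le j}$ and recall that $M$ is nondegenerate if and only if its covariance form is positive definite. It is convenient to split the index set of pairs $(i,j)$ into three groups: the single pair $(1,1)$; the ``mixed'' pairs $(1,j)$ with $2\le j\le N$; and the pairs $(i,j)$ with $2\le i\le j\le N$ lying entirely in the lower block. From \eqref{copgoi} one checks that the covariance of $M_{11}$ with the lower-block entries $M_{ij}$ ($i,j\ge 2$) is $d_1\delta_{ij}$, the covariance of $M_{11}$ with a mixed entry $M_{1j}$ is $0$, and the covariance of a mixed entry $M_{1j}$ with a lower-block entry $M_{kl}$ is $0$. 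Hence the mixed block decouples completely: the $M_{1j}$, $2\le j\le N$, are i.i.d.\ $\tfrac12$-variance Gaussians, always nondegenerate, and contribute no constraint.

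It remains to analyze the joint law of $M_{11}$ together with the lower block $(M_{ij})_{2\le i\le j\le N}$. The lower block alone, by the same computation that Cheng and Schwartzman \cite{CS18} carried out for GOI, is that of an $(N-1)\times(N-1)$ $\operatorname{GOI}(d_1)$ matrix $M'$, which is nondegenerate if and only if $d_1>-\frac{1}{N-1}$; this gives the first stated inequality. Conditionally on $d_1>-\frac{1}{N-1}$, nondegeneracy of the full vector is equivalent to nondegeneracy of $M_{11}$ after projecting out the lower block, i.e.\ to the Schur complement being positive, namely
\begin{align*}
\Var(M_{11}) - \Cov\bigl(M_{11}, \mathrm{vec}(M')\bigr)\, \Sigma_{M'}^{-1}\, \Cov\bigl(\mathrm{vec}(M'), M_{11}\bigr) > 0,
\end{align*}
where $\Sigma_{M'}$ is the covariance of the lower block. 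Here $\Var(M_{11}) = 1 + 2d_1 + 2d_2 + d_3$ from \eqref{copgoi} (the $\tfrac12(\delta_{ik}\delta_{jl}+\delta_{il}\delta_{jk})$ term gives $1$, the $d_1$ term gives $d_1$, and the $d_2$, $d_3$ terms each appear with their natural multiplicities at $i=j=k=l=1$), while $\Cov(M_{11}, M'_{ab})=d_1\delta_{ab}$ only. So only the diagonal entries $M'_{aa}$, $1\le a\le N-1$, of the lower block enter, and by $O(N-1)$-invariance the relevant sub-problem collapses to the Gaussian vector $(M'_{11},\dots,M'_{N-1,N-1})$, whose covariance is $\delta_{ab} + d_1$ (diagonal entries of a $\operatorname{GOI}(d_1)$ matrix). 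Using the Sherman--Morrison formula to invert $\mathbf{I}_{N-1} + d_1 \mathbf{1}\mathbf{1}^{\top}$ against the all-$d_1$ vector, one evaluates the quadratic form and, after routine algebra, the Schur complement becomes
\begin{align*}
1 + 2d_1 + 2d_2 + d_3 - \frac{(N-1)d_1^2}{1+(N-1)d_1}\cdot(\text{correction}) \;=\; (1+2d_2+d_3+d_1) - \frac{\bigl((N-1)d_2 + \text{terms}\bigr)^2}{1+(N-1)d_1},
\end{align*}
which one simplifies to match $\bigl(1 + d_3 + \tfrac{d_1 + 2d_2 - (N-1)d_2^2}{1+(N-1)d_1}\bigr)$ times a positive factor, giving the second inequality. (One should double-check whether the $d_2$ term also forces conditioning on the off-diagonal structure — but since $d_2$ multiplies $\delta_{i1}\delta_{j1}\delta_{kl}$, it only links $M_{11}$ to $\sum_a M'_{aa}$ as well, so it enters the same reduced vector.)

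The main obstacle I anticipate is purely bookkeeping: correctly accounting for the multiplicity with which $(1,1)$ appears in the Kronecker-delta expression \eqref{copgoi} — i.e.\ getting $\Var(M_{11})$ and the cross-covariances right, since $d_2$ appears in two symmetrized terms and one must be careful not to double count — and then carrying the Sherman--Morrison inversion through to produce exactly the stated fraction $\frac{d_1+2d_2-(N-1)d_2^2}{1+(N-1)d_1}$ rather than an equivalent-looking but mis-simplified expression. A secondary subtlety is justifying the clean decoupling of the mixed block and of the off-diagonal lower-block entries, which follows from the congruence-invariance of the SGOI law under orthogonal transformations fixing $e_1$ (block $1 \oplus O(N-1)$), so that only $O(N-1)$-invariant scalars, namely $M_{11}$, $\mathrm{tr}(M')$, and the spectrum of $M'$, can be statistically coupled.
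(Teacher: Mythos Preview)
Your approach is essentially the paper's: both reduce nondegeneracy to the diagonal vector $(M_{11},\dots,M_{NN})$, since every off-diagonal entry $M_{ij}$ with $i<j$ is $N(0,\tfrac12)$ and independent of all other entries, and then take the Schur complement of the $(1,1)$ position relative to the lower $(N-1)\times(N-1)$ block. The paper does this in one stroke by writing the full $N\times N$ diagonal covariance $\Theta$ in \eqref{eq:covM} and observing that $\Theta$ is positive definite iff its lower-right minor is positive definite (giving $d_1>-\tfrac{1}{N-1}$) and $\det\Theta>0$; you arrive at the same reduction after separately decoupling the mixed entries and the lower-block off-diagonals.

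Two of your covariances are wrong, though, and these are precisely the bookkeeping traps you anticipated. From \eqref{copgoi} with $i=j=k=l=1$ one gets $\Var(M_{11})=1+d_1+2d_2+d_3$, not $1+2d_1+2d_2+d_3$: the $\tfrac12(\delta_{ik}\delta_{jl}+\delta_{il}\delta_{jk})$ term contributes $1$ and the $d_1$ term contributes $d_1$, once each. More importantly, for $a\ge 2$ the $d_2$ term at $(i,j)=(1,1)$, $(k,l)=(a,a)$ yields $d_2\,\delta_{11}\delta_{11}\delta_{aa}=d_2$, so $\Cov(M_{11},M_{aa})=d_1+d_2$, not $d_1$; your parenthetical caveat about $d_2$ is correct and must be folded into the main computation, not treated as an afterthought. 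With those two fixes the Schur complement becomes
\[
1+d_1+2d_2+d_3-\frac{(N-1)(d_1+d_2)^2}{1+(N-1)d_1},
\]
and putting this over a common denominator gives exactly $1+d_3+\dfrac{d_1+2d_2-(N-1)d_2^2}{1+(N-1)d_1}$, as claimed.
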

\begin{proof}
Since $M$ is symmetric and the diagonal entries are independent of the off-diagonal entries, $M$ is nondegenerate if and only if the vector of diagonal entries $(M_{11},M_{22},\dots,M_{NN})$ is nondegenerate. By \eqref{copgoi}, the covariance matrix of the vector $(M_{11},M_{22},\dots,M_{NN})$ is
\begin{align}\label{eq:covM}
	\Theta =
    \begin{pmatrix}
      1+d_1+2d_2+d_3& d_1+d_2
      & \dots & d_1+d_2 \\
       d_1+d_2  &  d_1+1 & \dots & d_1 \\
      \vdots & \vdots &\ddots  & \vdots \\d_1+d_2 & d_1& \dots &  d_1+1 \\
    \end{pmatrix}.
  \end{align}
  It is straightforward to check that the lower right $(N-1)\times (N-1)$ submatrix of $\Theta$ is positive definite if and only if $d_1>-\frac{1}{N-1}$. Note that $\Theta$ is positive definite if and only if the determinant of $\Theta$ is larger than $0$ and the lower right $(N-1)\times (N-1)$ submatrix of $\Theta$ is positive definite. By the Schur complement formula, one can show that
  \begin{equation*}
      \det \Theta = [1+(N-1)d_1] \left[1+d_3+\frac{d_1+2d_2-(N-1)d_2^2}{1+(N-1)d_1}\right],
  \end{equation*}
  which gives the desired result.
\end{proof}

Let $M$ be an $N\times N$ $\operatorname{SGOI} (d_1,d_2,d_3)$ matrix. In general, $M$ can be represented as the following form
\begin{align}\label{M}
   M =\begin{pmatrix}
      \zeta_1& \xi^\mathsf T \\
       \xi & \operatorname{GOI}(d_1)
    \end{pmatrix}=
    \begin{pmatrix}
      \zeta_1& \xi^\mathsf T \\
       \xi & \GOE_{N-1}+\zeta_2\mathbf{I}_{N-1}
    \end{pmatrix} ,
\end{align}
where $\xi$ is a centered column Gaussian vector with covariance matrix $\frac{1}{2}\mathbf{I}_{N-1}$, which is independent of $\zeta_1,\zeta_2$ and the $(N-1) \times (N-1)$ GOE matrix $\GOE_{N-1}$. To represent $M$ as explicitly as possible, we need to determine the relationship among $\zeta_1,\zeta_2$ and the diagonal elements of $\GOE_{N-1}$. Since for $i=1,\dots,N-1$, 
$$1+d_1=\Var[(\GOE_{N-1})_{ii}+\zeta_2]=1+2\Cov[(\GOE_{N-1})_{ii}, \zeta_2]+\Var(\zeta_2),$$ where $(\GOE_{N-1})_{ii}$'s are the diagonal elements of $\GOE_{N-1}$, we find that $\Cov[(\GOE_{N-1})_{ii}, \zeta_2]$ is a constant for $1\le i\le N-1$. Let $\varsigma=\Cov(\zeta_1,\zeta_2)$ and $\vartheta=\Cov(\zeta_2,(\GOE_{N-1})_{ii})$. The covariance matrix of $(\zeta_1,\zeta_2,(\GOE_{N-1})_{ii, 1\le i\le N-1})$ is
\begin{align}\label{eq:Xi}
    \Xi=\begin{pmatrix}
        1+d_1+2d_2+d_3& \varsigma& d_1+d_2-\varsigma &\dots& d_1+d_2-\varsigma \\
        \varsigma & d_1-2\vartheta & \vartheta&\dots &\vartheta\\
        d_1+d_2-\varsigma & \vartheta &1& \dots& 0\\
        \vdots&\vdots &\vdots & \ddots &\vdots\\
        d_1+d_2-\varsigma &\vartheta &0 &\dots&1
    \end{pmatrix}.
\end{align}
A moment of reflection shows that $M$ is nondegenerate if and only if $\Xi$ is positive definite. Therefore, we just need to choose $\varsigma$ and $\vartheta$ such that
\begin{align*}
    \frac{-1-\sqrt{1+d_1(N-1)}}{N-1}< \vartheta <\frac{-1+\sqrt{1+d_1(N-1)}}{N-1}\quad \text{and}\quad \det \Xi>0.
\end{align*}
If $d_1\ge0$ and we set $\vartheta=0$ as in \cite[(2.2)]{CS18}, we find a sufficient condition for $\varsigma$:
\begin{align}\label{eq:d1pos}
    1+d_1+2d_2+d_3-\frac{\varsigma^2}{d_1}-(N-1)(d_1+d_2-\varsigma)^2>0.
\end{align}
If $d_1<0$ and we set $\vartheta=d_1$ as in \cite{CS18}, we will get a more involved condition. For concrete examples, if $d_1\ge0$, we may take $\vartheta=0$ and $\varsigma=\frac{d_1^2+d_1d_2}{1+d_1}$; if $d_1<0$, we may set $\vartheta=d_1$ and $\varsigma=0$.


Note that the distribution of SGOI matrices is not invariant under the orthogonal congruence transformation, which makes it hard to deduce the joint eigenvalue density for this matrix model. Fortunately, it is sufficient to use the joint eigenvalue density of GOI matrices \pref{eq:goidens} for our representation formulas below.
By the conditional distribution of Gaussian vectors,  we have \begin{align}\label{goicond}
    \mathbb{E}(\GOE_{N-1}+\zeta_2\mathbf{I}_{N-1}\mid \zeta_1=y)=&\frac{(d_1+d_2)y}{1+d_1+2d_2+d_3}\mathbf{I}_{N-1},\nonumber\\
   \left( \GOE_{N-1}+\zeta_2\mathbf{I}_{N-1}\mid \zeta_1=y\right)\stackrel{d}{=}&\frac{(d_1+d_2)y}{1+d_1+2d_2+d_3}\mathbf{I}_{N-1}+M',
\end{align}  where $M'$ is an $(N-1)\times (N-1)$ GOI$(c)$ matrix with parameter $c=\frac{d_1+d_1d_3-d_2^2}{1+d_1+2d_2+d_3}$. Since we always have
$\mathbb{E}[\zeta_1(\GOE_{N-1}+\zeta_2\mathbf{I}_{N-1})]=(d_1+d_2)\mathbf{I}_{N-1},
$ the conditional distribution \eqref{goicond} is invariant for different constructions of SGOI matrices. This is crucial in Theorem \ref{Shell} below, and the freedom to specify $\varsigma$ and $\vartheta$ in \pref{eq:Xi} allows us to choose matrix models that are easier to work with in Theorem \ref{shell}. In the following, we will only need the covariances given in \pref{eq:covM} but not the concrete relationship among $\zeta_1,\zeta_2$ and $\GOE_{N-1}$ until deriving Theorem \ref{shell}.

\subsection{The nondegeneracy condition}
According to \cite{AZ20}, we need the following  assumption for the model so that the random fields are twice differentiable  almost surely.

\begin{assumption}[Smoothness] \label{assumption1}
    For fixed $N\in\nz\cup\{\infty\}$, the function $D_N$ is four times differentiable at 0 and it satisfies
    \begin{equation*}
    0 < \left|D_N^{(4)}(0)\right| < \infty .
    \end{equation*}
\end{assumption}
For the non-isotropic Gaussian random fields with isotropic increments, we also need the following assumption, which is a natural assumption for stochastic processes with stationary increments.

\begin{assumption}[Pinning] \label{assumption2}
    We have $H_{N}(0)=0.$     
\end{assumption}
Let us recall the covariance structure of the non-isotropic Gaussian fields with isotropic increments.
\begin{lemma}[{\cite[Lemma A.1]{AZ20}}]
\label{Corv}
 Assume Assumptions \ref{assumption1} and \ref{assumption2}. Then for $x \in \mathbb{R}^{N}$,
\begin{align*}
\operatorname{Cov}\left[H_{N}(x), H_{N}(x)\right] &=D_N\left(\|x\|^{2}\right), \\
\operatorname{Cov}\left[H_{N}(x), \partial_{i} H_{N}(x)\right] &=D_N^{\prime}\left(\|x\|^{2}\right) x_{i}, \\
\operatorname{Cov}\left[\partial_{i} H_{N}(x), \partial_{j} H_{N}(x)\right] &=D_N^{\prime}(0) \delta_{i j}, \\
\operatorname{Cov}\left[H_{N}(x), \partial_{i j} H_{N}(x)\right] &=2 D_N^{\prime \prime}\left(\|x\|^{2}\right) x_{i} x_{j}+\left[D_N^{\prime}\left(\|x\|^{2}\right)-D_N^{\prime}(0)\right] \delta_{i j}, \\
\operatorname{Cov}\left[\partial_{k} H_{N}(x), \partial_{i j} H_{N}(x)\right] &=0, \\
\operatorname{Cov}\left[\partial_{l k} H_{N}(x), \partial_{i j} H_{N}(x)\right] &=-2 D_N^{\prime \prime}(0)\left[\delta_{j l} \delta_{i k}+\delta_{i l} \delta_{k j}+\delta_{k l} \delta_{i j}\right],
\end{align*}
where $\delta_{i j}$ denotes the Kronecker delta function, $i, j, k, l = 1, \dots, N$.
\end{lemma}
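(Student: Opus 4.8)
The plan is to reduce every entry of the covariance table to a mixed partial derivative of the two-point function $C(x,y):=\mathbb{E}[H_N(x)H_N(y)]$ and then differentiate an explicit formula for $C$. First, expanding $\mathbb{E}[(H_N(x)-H_N(y))^2]=\mathbb{E}[H_N(x)^2]-2C(x,y)+\mathbb{E}[H_N(y)^2]$ and using Assumption~\ref{assumption2} to get $\mathbb{E}[H_N(x)^2]=\mathbb{E}[(H_N(x)-H_N(0))^2]=D_N(\|x\|^2)$, relation \eqref{eq:def} gives
\begin{equation*}
C(x,y)=\tfrac12\big(D_N(\|x\|^2)+D_N(\|y\|^2)-D_N(\|x-y\|^2)\big),
\end{equation*}
and the first identity is the case $y=x$. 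By \eqref{DNr} the map $r\mapsto D_N(r)$ is smooth on $(0,\infty)$ (differentiate under the integral using $\int\frac{t}{1+t}\nu_N(\dd t)<\infty$ and the bounds on $\Lambda_N$ and its derivatives), and by Assumption~\ref{assumption1} it is four times differentiable at $0$; since in each term below the argument of a derivative of $D_N$ is either $\|x-y\|^2$ (which is $0$ on the diagonal $y=x$) or $\|x\|^2,\|y\|^2\in[0,\infty)$, this makes $C$ of class $C^4$ near the diagonal. I would then invoke the standard fact that such a Gaussian field is twice mean-square differentiable and that mean-square differentiation commutes with expectation, so that the covariance of any mixed partials of $H_N$ at $x$ and of $H_N$ at $y$ equals the corresponding mixed partial of $C(x,y)$, to be evaluated at $y=x$.

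The remaining five identities are obtained by carrying out these differentiations on the displayed formula for $C$. The organising observation is that $\partial_{x_i}\|x-y\|^2=-\partial_{y_i}\|x-y\|^2=2(x_i-y_i)$, so after differentiating, every monomial still carries a factor $x_i-y_i$ unless a derivative has consumed it; on the diagonal all such factors vanish, which kills most terms. For example $\partial_{y_j}C=D_N'(\|y\|^2)y_j+D_N'(\|x-y\|^2)(x_j-y_j)$, giving $\mathbb{E}[H_N(x)\partial_jH_N(x)]=D_N'(\|x\|^2)x_j$ at $y=x$; applying $\partial_{x_i}$ and then setting $y=x$ annihilates the first summand and yields $\mathbb{E}[\partial_iH_N(x)\partial_jH_N(x)]=D_N'(0)\delta_{ij}$. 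Likewise $\partial_{y_i}\partial_{y_j}C$ and its $\partial_{x_k}$-derivative give, after restriction to the diagonal, $2D_N''(\|x\|^2)x_ix_j+[D_N'(\|x\|^2)-D_N'(0)]\delta_{ij}$ and $0$ respectively, matching the fourth and fifth identities.

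The fourth-order identity $\mathbb{E}[\partial_{lk}H_N(x)\partial_{ij}H_N(x)]=\partial_{x_l}\partial_{x_k}\partial_{y_i}\partial_{y_j}C\big|_{y=x}$ is where the computation is heaviest, and this is the step I expect to be the main obstacle. Only the term $-\tfrac12 D_N(\|x-y\|^2)$ survives ($D_N(\|x\|^2)$ is killed by $\partial_y^2$, $D_N(\|y\|^2)$ by $\partial_x^2$), so writing $z:=x-y$ one must expand $\partial_{z_l}\partial_{z_k}\big[-2D_N''(\|z\|^2)z_iz_j-D_N'(\|z\|^2)\delta_{ij}\big]$ by the chain rule; intermediate terms contain $D_N'''$ and $D_N^{(4)}$ but each is multiplied by a positive power of $z$ and drops at $z=0$, and collecting the surviving terms produces $-2D_N''(0)(\delta_{ik}\delta_{jl}+\delta_{il}\delta_{jk}+\delta_{ij}\delta_{kl})$. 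Apart from this bookkeeping, the only points requiring care are the justification of the $L^2$-differentiability and of the interchange of differentiation with expectation under Assumption~\ref{assumption1}; everything else is mechanical.
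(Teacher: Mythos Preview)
Your argument is correct and is exactly the standard route: use the pinning assumption to write $C(x,y)=\tfrac12\big(D_N(\|x\|^2)+D_N(\|y\|^2)-D_N(\|x-y\|^2)\big)$, justify that covariances of derivatives equal the corresponding mixed partials of $C$, and then differentiate term by term, using that factors of $x-y$ vanish on the diagonal. Note, however, that the present paper does not give its own proof of this lemma at all; it simply quotes it as \cite[Lemma~A.1]{AZ20}, so there is nothing to compare against here beyond saying that your computation is the expected one and matches what the cited reference does.
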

Cheng and Schwartzman {\cite[Proposition 3.3]{CS18}} studied the nondegeneracy condition of the isotropic Gaussian random fields, and they showed that the Gaussian vector $(H_N(x), \nabla H_N(x), \partial_{i j} H_{N}(x),1\leq i\leq j\leq N)$ is nondegenerate if and only if  $\frac{B_N^{\prime}(0)^2}{B_N^{\prime\prime}(0)}< \frac{N+2}{N}.$  The following lemma gives the nondegeneracy condition of the non-isotropic Gaussian random fields with isotropic increments.

\begin{lemma} \label{Nond}
Assume Assumptions \ref{assumption1} and \ref{assumption2} hold for the structure function $D_N$ and the associated Gaussian field $H_N$. Then for any fixed $x \in \mathbb{R}^N \setminus \{0\}$,
 the Gaussian vector $(H_N(x), \nabla H_N(x), \partial_{i j} H_{N}(x),1\leq i\leq j\leq N)$ is nondegenerate if and only if for $r=\|x\|^{2}$,
\begin{equation}\label{nondg}
D_N\left(r\right)-\frac{D_N^{\prime}(r)^2 r}{D_N^{\prime}(0)}
+\frac{(N+1)D_N^{\prime\prime}(r)^2 r^{2}}{(N+2)D_N^{\prime\prime}(0)} +\frac{2rD_N^{\prime\prime}(r) ( D_N^{\prime}(r)-D_N^{\prime}(0) )}{(N+2)D_N^{\prime\prime}(0)}
+\frac{N ( D_N^{\prime}(r)-D_N^{\prime}(0) )^2}{2(N+2)D_N^{\prime\prime}(0)}>0.
\end{equation}
Moreover, if $D\in \dx_N$ for some $N\in\nz\cup\{\8\}$, Assumptions \ref{assumption1} and \ref{assumption2} hold, and we have for $r=\|x\|^{2}$, 
\begin{align}\label{larn}
D\left(r\right)-\frac{D^{\prime}(r)^2 r}{D^{\prime}(0)}
+\frac{D^{\prime\prime}(r)^2 r^{2}}{D^{\prime\prime}(0)}
+\frac{\left(D^{\prime}(r)-D^{\prime}(0)\right)^2}{2D^{\prime\prime}(0)}>0,
\end{align}
then the Gaussian vector $(H_N(x), \nabla H_N(x), \partial_{i j} H_{N}(x),1\leq i\leq j\leq N)$ is nondegenerate. Here if $N=\8$, we understand that the indices $i,j\in \nz$.
\end{lemma}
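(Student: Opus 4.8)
The plan is to write down the covariance matrix of the Gaussian vector from Lemma~\ref{Corv}, use the orthogonal invariance of the field to normalize the point $x$, and then decide positive-definiteness by iterating the Schur complement. Since $H_N$ has isotropic increments and $H_N(0)=0$, its law is invariant under $x\mapsto Ox$ for $O\in O(N)$, and the induced transformation on $\big(H_N(x),\nabla H_N(x),\nabla^2H_N(x)\big)$ is a linear isomorphism, so nondegeneracy at $x$ is equivalent to nondegeneracy at $Ox$ and one may take $x=\sqrt r\,e_1$. Feeding $x=\sqrt r\,e_1$ into Lemma~\ref{Corv}, the covariance matrix acquires a transparent block structure: the coordinates $\partial_2H_N,\dots,\partial_NH_N$ are i.i.d.\ $\mathcal N(0,D_N'(0))$ and independent of everything else; the off-diagonal second derivatives $\partial_{ij}H_N$, $1\le i<j\le N$, are i.i.d.\ $\mathcal N(0,-2D_N''(0))$ and independent of everything else; and the remaining ``core'' is the vector $\big(H_N,\partial_1H_N,\partial_{11}H_N,\dots,\partial_{NN}H_N\big)$. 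I would also note that Assumption~\ref{assumption1} together with the representation \eqref{DNr} forces $\nu_N\neq0$, hence $D_N'(0)>0$ and $D_N''(0)<0$, so the two i.i.d.\ blocks are automatically nondegenerate and the whole issue is the nondegeneracy of the core.

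Next I would reduce the core by two Schur complements. Inside the core, $\partial_1H_N$ is uncorrelated with $\partial_{11}H_N,\dots,\partial_{NN}H_N$ (gradient $\perp$ Hessian) and is correlated only with $H_N$, with $\Cov(H_N,\partial_1H_N)=\sqrt r\,D_N'(r)$ and $\Var(\partial_1H_N)=D_N'(0)$; eliminating it replaces $\Var(H_N)=D_N(r)$ by $v_H:=D_N(r)-rD_N'(r)^2/D_N'(0)$ and leaves the $(N+1)\times(N+1)$ covariance matrix of $(H_N,\partial_{11}H_N,\dots,\partial_{NN}H_N)$, which has the form $\left(\begin{smallmatrix} v_H & w^{\mathsf T}\\ w & G\end{smallmatrix}\right)$ with $G=-4D_N''(0)\mathbf{I}_N-2D_N''(0)\mathbf 1\mathbf 1^{\mathsf T}$ (positive definite, eigenvalues $-4D_N''(0)$ and $-2(N+2)D_N''(0)$) and coupling vector $w=(a,b,\dots,b)^{\mathsf T}$, where $a=2rD_N''(r)+D_N'(r)-D_N'(0)$ and $b=D_N'(r)-D_N'(0)$. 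Since $G\succ0$, the core is nondegenerate iff $v_H-w^{\mathsf T}G^{-1}w>0$; computing $G^{-1}$ by Sherman--Morrison (so that $G^{-1}\mathbf 1=\mathbf 1/(-2(N+2)D_N''(0))$) and expanding $v_H-w^{\mathsf T}G^{-1}w$ produces exactly the left-hand side of \eqref{nondg}. This settles the first claim.

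For the second claim, observe that \eqref{larn} is the formal $N\to\infty$ limit of \eqref{nondg}, the coefficients $\frac{N+1}{N+2},\frac{1}{N+2},\frac{N}{2(N+2)}$ converging to $1,0,\frac12$. Setting $p:=rD_N''(r)$ and $q:=D_N'(r)-D_N'(0)$, the last three terms of \eqref{nondg} equal $\frac{(N+1)p^2+2pq+\frac N2q^2}{(N+2)D_N''(0)}$, and the elementary bound $(N+1)p^2+2pq+\tfrac N2q^2\le(N+2)\bigl(p^2+\tfrac12q^2\bigr)$, which is simply $(p-q)^2\ge0$, combined with $D_N''(0)<0$, shows that the left-hand side of \eqref{nondg} is at least the left-hand side of \eqref{larn}; hence \eqref{larn} implies \eqref{nondg} for every finite $N$, and the first claim gives nondegeneracy. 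When $D\in\dx_\8$ and $N=\infty$, I would apply this to each finite-dimensional restriction $H_{N'}$ ($N'\in\nz$) of the field --- for which $D\in\dx_{N'}$, so Lemma~\ref{Corv} and the first part apply with $D_{N'}=D$ --- and conclude that every finite-dimensional marginal of the (now countably infinite) vector is nondegenerate.

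The only delicate point is bookkeeping: after fixing $x=\sqrt r\,e_1$ one must correctly identify which coordinates decouple, and the Sherman--Morrison inversion of $G$ must be carried through so that the scalar $v_H-w^{\mathsf T}G^{-1}w$ lands on \eqref{nondg} with precisely the stated coefficients. There is no conceptual obstacle beyond this computation, and the inequality used for the second claim is immediate.
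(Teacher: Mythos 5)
Your proof is correct and follows essentially the same route as the paper: both reduce nondegeneracy to positivity of the Schur complement of the covariance matrix from Lemma \ref{Corv} (with the Sherman--Morrison inversion of $-4D_N''(0)\mathbf{I}_N-2D_N''(0)\mathbf{1}_N\mathbf{1}_N^{\mathsf T}$ producing exactly \eqref{nondg}), and both deduce the second claim from \eqref{larn} via the same elementary inequality, your $(p-q)^2\ge 0$ being the paper's $a^2+b^2\ge 2ab$. The only cosmetic difference is that you first rotate to $x=\sqrt r\,e_1$ so that the off-diagonal Hessian entries and the tangential gradient components visibly decouple, whereas the paper keeps a general $x$ and invokes eigenvalue interlacing to reduce positive definiteness to $\det\mathbf{G}>0$; both devices lead to the identical scalar computation.
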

\begin{proof}
Following Lemma \ref{Corv}, it is clear that the covariance matrix of the Gaussian vector 
\begin{equation*}
	\left(H_N(x), \nabla H_N(x), \partial_{i j} H_{N}(x),1\leq i\leq j\leq N\right),
\end{equation*}
is given by
\begin{align}
    \textbf{$\mathbf{G}$}=
    \begin{pmatrix}
      D_N\left(\|x\|^{2}\right)& \xi_1^\mathsf T& \xi_2^\mathsf T& \xi_3^\mathsf T \\
       \xi_1  &  \textbf{$\mathbf{A}$} & \mathbf{0}_1^\mathsf T & \mathbf{0}_2^\mathsf T \\
       \xi_2  & \mathbf{0}_1 &  \textbf{$\mathbf{B}$}& \mathbf{0}_3^\mathsf T \\
       \xi_3  & \mathbf{0}_2 & \mathbf{0}_3 &  \textbf{$\mathbf{C}$} \\
    \end{pmatrix} \nonumber ,
  \end{align}
where
$$\xi_1=\left(D_N^{\prime}(\|x\|^{2})x_1,\dots,D_N^{\prime}(\|x\|^{2})x_N \right)^\mathsf T,$$ is the covariance vector of $H_{N}(x)$ with $\left(\partial_{1} H_{N}(x),\dots,\partial_{N} H_{N}(x) \right)$,
$$\xi_2=\left(2D_N^{\prime\prime}(\|x\|^{2})x_1^2+D_N^{\prime}(\|x\|^{2})-D_N^{\prime}(0), \dots,
2D_N^{\prime\prime}(\|x\|^{2})x_N^2+D_N^{\prime}(\|x\|^{2})-D_N^{\prime}(0) \right)^\mathsf T,$$ is the covariance vector of $H_{N}(x)$ with $\left(\partial_{11} H_{N}(x),\dots,\partial_{NN} H_{N}(x) \right)$,
$$\xi_3=2D_N^{\prime\prime}(\|x\|^{2})\left(x_1x_2,\dots,x_1x_N,
x_2x_3,\dots,x_2x_N,\dots, x_{N-1}x_N \right)^\mathsf T,$$ is an $\frac{N(N-1)}{2}$-dimensional vector, which is the covariance vector of $H_{N}(x)$ with the Hessian above the diagonal, $\mathbf{0}_1$, $\mathbf{0}_2$ and $\mathbf{0}_3$ are $N\times N$, $\frac{N(N-1)}{2}\times N$, and $\frac{N(N-1)}{2}\times N$  matrices with all elements equal to zero, respectively. In addition to this,
\begin{equation*}
	\mathbf{A}=D_N^{\prime}(0)\mathbf{I}_N, \qquad
	\mathbf{B}=-4 D_N^{\prime \prime}(0)\mathbf{I}_N-2 D_N^{\prime \prime}(0)\mathbf{1}_N \mathbf{1}_N^\mathsf T, \qquad \mathbf{C}=-2D_N^{\prime\prime}(0)\mathbf{I}_{\frac{N(N-1)}{2}},
\end{equation*}
where $\mathbf{1}_N$ is the $N$-dimensional column vector with all elements equal to one hereafter.
It is clear that all of $\mathbf{A},\mathbf{B}$ and $\mathbf{C}$ are positive definite matrices. By the eigenvalue interlacing theorem, the matrix $\mathbf{G}$ has at most one negative eigenvalue. Therefore, 
the positive definiteness of the matrix $\mathbf{G}$ is equivalent to $\det \mathbf{G}>0$. Noting that $\mathbf{B}^{-1}=\frac{-1}{4(N+2)D_N^{\prime\prime}(0)}\left[(N+2)\mathbf{I}_N-\mathbf{1}_N \mathbf{1}_N^T\right]$, we may consider the Schur complement of the block diagonal submatrix $\mathrm{Diag}(\mathbf{A},\mathbf{B},\mathbf{C})$ in $\mathbf{G}$ and find
\begin{multline*}
\frac{\det \mathbf{G}}{\det \mathbf{A}\det \mathbf{B}\det \mathbf{C}}=D_N\left(\|x\|^{2}\right)-\frac{D_N^{\prime}(\|x\|^{2})^2\|x\|^{2}}{D_N^{\prime}(0)}
+\frac{(N+1)D_N^{\prime\prime}(\|x\|^{2})^2\|x\|^{4}}{(N+2)D_N^{\prime\prime}(0)} \\
+\frac{2D_N^{\prime\prime}(\|x\|^{2})\|x\|^{2}\left(D_N^{\prime}(\|x\|^{2})-D_N^{\prime}(0)\right)}{(N+2)D_N^{\prime\prime}(0)}
+\frac{N\left(D_N^{\prime}(\|x\|^{2})-D_N^{\prime}(0)\right)^2}{2(N+2)D_N^{\prime\prime}(0)}.
\end{multline*}
The proof of the first claim is completed by replacing $\|x\|^{2}$ with $r$.

For the second assertion, we need to show that
\begin{equation*}
D\left(r\right)-\frac{D^{\prime}(r)^2 r}{D^{\prime}(0)}
+\frac{(N+1)D^{\prime\prime}(r)^2 r^{2}}{(N+2)D^{\prime\prime}(0)} +\frac{2rD^{\prime\prime}(r)\left(D^{\prime}(r)-D^{\prime}(0)\right)}{(N+2)D^{\prime\prime}(0)}
+\frac{N\left(D^{\prime}(r)-D^{\prime}(0)\right)^2}{2(N+2)D^{\prime\prime}(0)}>0.
\end{equation*}
Notice that
\begin{align*}
&\mathrel{\phantom{=}} D\left(r\right)-\frac{D^{\prime}(r)^2 r}{D^{\prime}(0)}
+\frac{(N+1)D^{\prime\prime}(r)^2 r^{2}}{(N+2)D^{\prime\prime}(0)}
+\frac{2rD^{\prime\prime}(r)\left(D^{\prime}(r)-D^{\prime}(0)\right)}{(N+2)D^{\prime\prime}(0)}
+\frac{N\left(D^{\prime}(r)-D^{\prime}(0)\right)^2}{2(N+2)D^{\prime\prime}(0)}\nonumber\\
&= D\left(r\right)-\frac{D^{\prime}(r)^2 r}{D^{\prime}(0)}
+\frac{D^{\prime\prime}(r)^2 r^{2}}{D^{\prime\prime}(0)}
+\frac{\left(D^{\prime}(r)-D^{\prime}(0)\right)^2}{2D^{\prime\prime}(0)}
\nonumber\\
&\quad + \frac{\left(D^{\prime}(r)-D^{\prime}(0)\right)^2}{-(N+2)D^{\prime\prime}(0)}+\frac{D^{\prime\prime}(r)^2 r^{2}}{-(N+2)D^{\prime\prime}(0)}-\frac{2rD^{\prime\prime}(r)\left(D^{\prime}(r)-D^{\prime}(0)\right)}{-(N+2)D^{\prime\prime}(0)}.
\end{align*}
By the elementary inequality $a^2+b^2\geq 2ab$, we have
$$\frac{\left(D^{\prime}(r)-D^{\prime}(0)\right)^2}{-(N+2)D^{\prime\prime}(0)}+\frac{D^{\prime\prime}(r)^2 r^{2}}{-(N+2)D^{\prime\prime}(0)}-\frac{2rD^{\prime\prime}(r)\left(D^{\prime}(r)-D^{\prime}(0)\right)}{-(N+2)D^{\prime\prime}(0)}\geq 0.$$
Together with the assumption \eqref{larn}, we obtain the desired result.
\end{proof}
We remark that the condition \eqref{larn} does not depend on the dimension $N$, which makes it easy to apply in the following. Indeed, if it holds for all $r>0$, then the Gaussian field and its derivatives associated to $D$ are nondegenerate, provided $D$ belongs to a certain class $\dx_N$ and Assumptions \ref{assumption1} and \ref{assumption2} hold.

\subsection{Representation with GOI matrices}

Under the nondegeneracy condition \eqref{nondg},
 we now investigate the expected number of critical points (with or without given indices) of non-isotropic Gaussian random fields with isotropic increments.

 We follow the idea for the case $\dx_\8$ in \cite{AZ20,AZ22} and list  the notations from \cite[(3.15)]{AZ20} for later use:
\begin{equation}
\label{eq:msialbt}
  \begin{split}
  m_1 & =m_1(\rho,u)=  \frac{u( 2D_N''(\rho^2)\rho^2+D_N'(\rho^2) -D_N'(0) )}{D_N(\rho^2) -\frac{D_N'(\rho^2)^2 \rho^2}{D_N'(0) }}, \\
  m_2&=m_2(\rho,u)=  \frac{u(D_N'(\rho^2) -D_N'(0) )}{D_N( \rho^2) -\frac{D_N'(\rho^2)^2 \rho^2}{D_N'(0) }}, \qquad \qquad
  \si_Y =\si_Y(\rho) =\sqrt{D_N(\rho^2)-\frac{D_N'(\rho^2)^2\rho^2}{D_N'(0)} }, \\
\alpha &=\alpha(\rho^2)= \frac{2D_N''(\rho^2)}{ \sqrt{ D_N(\rho^2)-\frac{D_N'(\rho^2)^2 \rho^2}{D_N'(0)}}},  \qquad \qquad \; \; \;
   \beta =\beta(\rho^2)=\frac{D_N'(\rho^2 )-D_N'(0)}{\sqrt{ D_N(\rho^2 )-\frac{D_N'(\rho^2)^2 \rho^2}{D_N'(0)}}}, \\
   \si_1 & =\si_1(\rho^2)= \sqrt{-4D_N''(0)-(\al \rho^2 +\beta)\al\rho^2}, \qquad
  \si_2 =\si_2(\rho^2)= \sqrt{-2D_N''(0)-(\al\rho^2 +\beta)\beta}.
  \end{split}
\end{equation}
Using Lemma \ref{Corv}, we define
\begin{align*}
    \Sigma_{01} &=\operatorname{Cov}\left(H_N(x), \nabla H_N(x)\right)=D_N^{\prime}\left(\|x\|^2\right) x^{\top}, \\
    \Sigma_{11} &=\operatorname{Cov}\left(\nabla H_N(x)\right)=D_N^{\prime}(0) \mathbf{I}_N,
\end{align*}
and
\begin{equation*}
    Y=H_N(x)-\Sigma_{01} \Sigma_{11}^{-1} \nabla H_N(x)=H_N(x)-\frac{D_N^{\prime}\left(\|x\|^2\right) \sum_{i=1}^N x_i \partial_i H_N(x)}{D_N^{\prime}(0)}.
\end{equation*}
Then $Y$ is a centered Gaussian random variable whose variance $\sigma_Y^2$ is defined in \eqref{eq:msialbt} with $\rho$ replaced by $\|x\|$.
Notice that both $Y$ and $\nabla^2 H_N(x)$ are independent of $\nabla H_N(x)$. By the Kac--Rice formula \cite[Theorem 11.2.1]{AT07},
\begin{equation}\label{Critical1}
    \begin{split}
    & \mathrel{\phantom{=}} \mathbb{E} [\operatorname{Crt}_N\left(E, T_N\right)] \\
    & =\int_{T_N} \mathbb{E}\left[\left|\operatorname{det} \nabla^2 H_N(x)\right| \mathbf{1}\left\{Y+ \Sigma_{01} \Sigma_{11}^{-1} \nabla H_N(x) \in E\right\} \middle\vert \nabla H_N(x)=0\right] p_{\nabla H_N(x)}(0) \, \mathrm{d} x \\
    & =\int_{T_N} \mathbb{E}\left[\left|\operatorname{det} \nabla^2 H_N(x)\right| \mathbf{1}\{Y \in E\}\right] p_{\nabla H_N(x)}(0) \, \mathrm{d} x,
    \end{split}
\end{equation}
and similarly
\begin{equation}\label{Critical2}
    \mathrel{\phantom{=}} \mathbb{E} [\operatorname{Crt}_{N,k} \left(E, T_N\right)]
   =\int_{T_N} \mathbb{E}\left[\left|\operatorname{det} \nabla^2 H_N(x)\right| \mathbf{1}\{Y \in E, i\left(\nabla^2 H_N(x)\right)=k \}\right] p_{\nabla H_N(x)}(0) \, \mathrm{d} x,
\end{equation}
where $p_{\nabla H_N(x)}(0) = (2 \pi D_N'(0))^{-N / 2}$ is the p.d.f.~of $\nabla H_N(x)$ at $0$.

From Lemma \ref{Corv}, we note that $\nabla^2 H_N(x)$ has the same distribution as $2\sqrt{-D_N^{\prime \prime}(0)}$ $\operatorname{GOI}(\frac{1}{2})$. Arguing as in \cite[Section 3]{AZ20} and using the spherical coordinates, we deduce that
\begin{align*}
    &\left(\nabla^2 H_N (x) \mid Y=u\right) \stackrel{d}{=}\begin{pmatrix}
      m_1 (\|x\|, u) & 0\\
0&m_2 (\|x\|, u) \mathbf{I}_{N-1}
    \end{pmatrix}
    +2 \sqrt{-D_N^{\prime \prime}(0)} M =: G,
\end{align*}
where $M$ is an  $\operatorname{SGOI} (d_1,d_2,d_3)$ matrix  with parameters
\begin{equation}\label{d123}
    d_1 = \frac{1}{2}+\frac{\beta (\|x\|^2) ^2}{4D_N'' (0)}, \qquad
    d_2 = \frac{\alpha (\|x\|^2) \beta (\|x\|^2) \|x\|^2}{4D_N'' (0)}, \qquad
    d_3 = \frac{\alpha (\|x\|^2)^2 \|x\|^4}{4D_N'' (0)},
\end{equation}
and the functions $m_1$, $m_2$, $\alpha$, and $\beta$ are given as in \eqref{eq:msialbt} with  $\rho$ replaced by $\|x\|$. Similar to \eqref{M}, we may write the shifted SGOI matrix $G$ as a block matrix
\begin{equation} \label{G}
    G = \begin{pmatrix}
    2\sqrt{- D_N^{\prime \prime}(0)}\zeta_1+m_1 & 2\sqrt{- D_N^{\prime \prime}(0)} \xi^{\top} \\
    2\sqrt{- D_N^{\prime \prime}(0)}\xi & G_{* *}
    \end{pmatrix},
\end{equation}
where
\begin{equation} \label{G**}
    G_{**} := 2\sqrt{- D_N^{\prime \prime}(0)} \left[ \mathrm{GOE}_{N-1} + \left( \zeta_2+\frac{m_2}{2\sqrt{-D_N^{\prime \prime}(0)}}\right) \mathbf{I}_{N-1} \right].
\end{equation}
From \eqref{goicond}, we know
\begin{equation} \label{y}
    \begin{split}
    \mathbb{E}(\GOE_{N-1}+\zeta_2\mathbf{I}_{N-1}\mid \zeta_1=y) &= \frac{(2D_N^{\prime \prime}(0)+\beta^2+\alpha\beta\|x\|^2)y}{6D_N^{\prime \prime}(0)+(\beta+\alpha\|x\|^2)^2}\mathbf{I}_{N-1}, \\
   \left( \GOE_{N-1}+\zeta_2\mathbf{I}_{N-1}\mid \zeta_1=y\right) &\stackrel{d}{=} \frac{(2D_N^{\prime \prime}(0)+\beta^2+\alpha\beta\|x\|^2)y}{6D_N^{\prime \prime}(0)+(\beta+\alpha\|x\|^2)^2}\mathbf{I}_{N-1}+M',
   \end{split}
\end{equation}
where $M'$ is an $(N-1)\times (N-1)$ GOI$(c)$ matrix with parameter
\begin{equation}\label{c}
    c = \frac{d_1+d_1d_3-d_2^2}{1+d_1+2d_2+d_3}=\frac{4D_N'' (0)+2\beta^2+\alpha^2\|x\|^4}{12D_N'' (0) + 2(\beta+\alpha\|x\|^2)^2}.
\end{equation}
 Set
\begin{equation}\label{m3}
m_3=\frac{(2D_N^{\prime \prime}(0)+\beta^2+\alpha\beta\|x\|^2)y}{6D_N^{\prime \prime}(0)+(\beta+\alpha\|x\|^2)^2}+\frac{m_2}{2\sqrt{-D_N^{\prime \prime}(0)}},
\end{equation}
where $m_2$ is defined in \eqref{eq:msialbt}. Let $\lambda_1 \leq \cdots \leq \lambda_{N-1}$ be the eigenvalues of the $\operatorname{GOI}(c)$ matrix $M'$. Since the distribution of $\operatorname{GOI}(c)$ matrices is invariant under orthogonal congruence transformation, following the same argument as for the GOE matrices, there exists a random orthogonal matrix $V$ independent of the unordered eigenvalues $\tilde{\lambda}_j, j=1, \ldots, N-1$ such that
\begin{align}\label{rotation}
V M' V^\mathsf T = \begin{pmatrix}
 \tilde{\lambda}_1 & \cdots & 0 \\
\vdots & \ddots & \vdots \\
0 & \cdots & \tilde{\lambda}_{N-1}
\end{pmatrix}.
\end{align}
Since the law of the Gaussian vector $\xi$ is rotationally invariant, $V \xi$ is a centered Gaussian vector with covariance matrix $\frac{1}{2} \mathbf{I}_{N-1}$ that is independent of $\tilde{\lambda}_j$'s. We can rewrite $V \xi \stackrel{d}{=} Z / \sqrt{2}$, where $Z = (Z_1, \ldots, Z_{N-1} )$ is an $(N-1)$-dimensional standard Gaussian random vector. We also need the following lemma for the calculation of  $\mathbb{E} [\operatorname{Crt}_{N,k}\left(E, \left(R_{1}, R_{2}\right)\right)]$. Recall that the signature of a symmetric matrix is the number of positive eigenvalues minus that of negative eigenvalues.
\begin{lemma}[{\cite[Equation 2]{Laz88}}]
Let $S$ be a symmetric block matrix, and write its inverse $S^{-1}$ in block form with the same block structure:
$$
S=\left(\begin{array}{cc}
A & B \\
B^{\sfT} & C
\end{array}\right), \qquad S^{-1}=\left(\begin{array}{cc}
A^{\prime} & B^{\prime} \\
\left(B^{\prime}\right)^{\sfT} & C^{\prime}
\end{array}\right) .
$$
Then $\operatorname{sgn}(S)=\operatorname{sgn}(A)+\operatorname{sgn}\left(C^{\prime}\right)$, with $\operatorname{sgn}(M)$ denoting the signature of the matrix $M$.\label{sgn}
\end{lemma}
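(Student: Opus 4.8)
\medskip
\noindent\textbf{Proof strategy.} The plan is to reduce the statement to Sylvester's law of inertia via the block $LDL^{\mathsf T}$ factorization, together with the classical formula for the inverse of a $2\times2$ block matrix. I will assume throughout that $A$ is invertible; this is the only case needed in the sequel, since in every application of this lemma the block $A$ is a nonzero scalar.

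First I would introduce the Schur complement $\Sigma := C - B^{\mathsf T} A^{-1} B$ of $A$ in $S$ and record the factorization
\begin{equation*}
S = \begin{pmatrix} I & 0 \\ B^{\mathsf T} A^{-1} & I \end{pmatrix}
\begin{pmatrix} A & 0 \\ 0 & \Sigma \end{pmatrix}
\begin{pmatrix} I & A^{-1} B \\ 0 & I \end{pmatrix},
\end{equation*}
valid because $A = A^{\mathsf T}$ is invertible. The two outer factors are transposes of one another and invertible, so $S$ is congruent to $\mathrm{Diag}(A,\Sigma)$; by Sylvester's law of inertia congruent symmetric matrices have the same signature, and the signature of a block-diagonal symmetric matrix is the sum of the signatures of its blocks. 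Hence $\operatorname{sgn}(S) = \operatorname{sgn}(A) + \operatorname{sgn}(\Sigma)$, and in particular $S$ is invertible exactly when $\Sigma$ is.

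Next, assuming $S$ (equivalently $\Sigma$) invertible, I would invoke the standard block-inverse identity
\begin{equation*}
S^{-1} = \begin{pmatrix} A^{-1} + A^{-1} B \Sigma^{-1} B^{\mathsf T} A^{-1} & -A^{-1} B \Sigma^{-1} \\ -\Sigma^{-1} B^{\mathsf T} A^{-1} & \Sigma^{-1} \end{pmatrix},
\end{equation*}
which one verifies directly by multiplying out $S\,S^{-1}$. Comparing the lower-right block with the given decomposition of $S^{-1}$ gives $C' = \Sigma^{-1}$. Since a symmetric invertible matrix and its inverse have reciprocal eigenvalues, and reciprocals have the same sign, $\operatorname{sgn}(\Sigma) = \operatorname{sgn}(\Sigma^{-1}) = \operatorname{sgn}(C')$. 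Substituting into the identity of the previous step yields $\operatorname{sgn}(S) = \operatorname{sgn}(A) + \operatorname{sgn}(C')$, as asserted.

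I do not anticipate any genuine obstacle here: this is a standard fact in linear algebra, recorded as \cite[Equation 2]{Laz88}. The one point carrying the actual content is the identity $C' = \Sigma^{-1}$ — that is, the lower-right block of $S^{-1}$ is precisely the inverse of the Schur complement of $A$ in $S$; everything else is bookkeeping with Sylvester's law. If one wished to drop the invertibility of $A$, the symmetric version of the argument applies whenever $C'$ is invertible: run the two steps above with $S$ and $S^{-1}$ interchanged and with $A$ and $C'$ interchanged, using that the Schur complement of $C'$ in $S^{-1}$ equals $A^{-1}$ and that $\operatorname{sgn}(S^{-1}) = \operatorname{sgn}(S)$.
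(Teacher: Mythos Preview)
Your proof is correct and complete. However, note that the paper does not actually prove this lemma: it is simply quoted from \cite[Equation~2]{Laz88} without proof, as a known linear-algebra fact. So there is no ``paper's own proof'' to compare against. Your argument via the Schur complement factorization and Sylvester's law of inertia is exactly the standard route, and your remark that the applications only require the case where $A$ is a nonzero scalar (hence invertible) is apt, since in the paper the lemma is applied to the block matrix $G$ in \eqref{G} with the $1\times 1$ upper-left block.
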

 Let us define
\begin{equation*}
    \eta (\zeta_1,G_{**}) =m_1+2\sqrt{-D_N^{\prime \prime}(0)}\zeta_1+4D_N^{\prime \prime}(0)\xi^{\top} G_{* *}^{-1} \xi.
\end{equation*}
Following Lemma \ref{sgn}, we have for $1\leq k \leq N-1$,
\begin{equation*}
    \{i(G)=k\}=\left\{i\left(G_{* *}\right)=k, \eta (\zeta_1,G_{**}) >0\right\} \cup \left\{ i\left(G_{* *}\right)=k-1, \eta (\zeta_1,G_{**}) <0 \right\},
\end{equation*}
for $k=0$,
\begin{equation*}
    \{i(G)=0\}=\left\{i\left(G_{* *}\right)=0, \eta (\zeta_1,G_{**}) >0\right\},
\end{equation*}
and for $k=N$,
\begin{equation*}
    \{i(G)=N\}=\left\{ i\left(G_{* *}\right)=N-1, \eta (\zeta_1,G_{**}) <0 \right\} .
\end{equation*}
Moreover, by \eqref{y} and \eqref{rotation}, we deduce
\begin{equation} \label{eta'}
    \begin{split}
    \left(\eta(\zeta_1,G_{**}) \middle\vert \zeta_1=y\right)
    &\stackrel{d}{=} m_1+2\sqrt{-D_N^{\prime \prime}(0)}y-2\sqrt{-D_N^{\prime \prime}(0)}\xi^{\top} \left(M'+m_3\mathbf{I}_{N-1}\right)^{-1} \xi \\
    &\stackrel{d}{=}m_1+2\sqrt{-D_N^{\prime \prime}(0)}y-\sqrt{-D_N^{\prime \prime}(0)}\sum_{l=1}^{N-1} \frac{Z_l^2}{\lambda_l+m_3} \\
    &=: \eta'(\Lambda),
    \end{split}
\end{equation}
where $\Lambda=(\lambda_1,\dots,\lambda_{N-1}).$ 

Recall that $M$ is an  $\operatorname{SGOI} (d_1,d_2,d_3)$ matrix with parameters $d_1,d_2$ and $d_3$ defined in \eqref{d123} depending on the structure function $D_N$. Since we will use $M$ to deduce the  main theorem in this section, it is also necessary to guarantee that $M$ is nondegenerate. 
\begin{lemma}
The nondegeneracy condition \eqref{nondg} holds for $r=\|x\|^{2}$ if and only if the $\operatorname{SGOI} (d_1,d_2,d_3)$ matrix $M$ is nondegenerate for $x \in \mathbb{R}^N \setminus \{0\}$, where $d_1$, $d_2$ and $d_3$ are defined in \eqref{d123}. 
\end{lemma}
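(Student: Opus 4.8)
The plan is to read off both conditions probabilistically, thereby avoiding a direct algebraic comparison of \eqref{nondg} with the two inequalities of Lemma~\ref{NonPGOI}. By Lemma~\ref{Nond}, condition \eqref{nondg} with $r=\|x\|^2$ is equivalent to the nondegeneracy of the Gaussian vector $W:=\bigl(H_N(x),\nabla H_N(x),\partial_{ij}H_N(x),\,1\le i\le j\le N\bigr)$. Recall from the proof of Lemma~\ref{Nond} that $\mathbf{A}=D_N'(0)\mathbf{I}_N$ and $\mathbf{B},\mathbf{C}$ are positive definite, so $D_N'(0)>0$ and $D_N''(0)<0$; in particular $\nabla H_N(x)$, whose covariance is $D_N'(0)\mathbf{I}_N$, is automatically nondegenerate, while $2\sqrt{-D_N''(0)}\neq0$.

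Next I would use the decomposition already set up above. Since $H_N(x)=Y+\Sigma_{01}\Sigma_{11}^{-1}\nabla H_N(x)$, the vector $W$ is the image of $\bigl(Y,\nabla H_N(x),\nabla^2 H_N(x)\bigr)$ under an invertible linear map, so $W$ is nondegenerate if and only if $\bigl(Y,\nabla H_N(x),\nabla^2 H_N(x)\bigr)$ is. Both $Y$ and $\nabla^2H_N(x)$ are independent of $\nabla H_N(x)$, so (after grouping the entries appropriately) the covariance matrix of this vector is block diagonal with one block for $\bigl(Y,\nabla^2H_N(x)\bigr)$ and one for $\nabla H_N(x)$; hence it is nondegenerate if and only if $\nabla H_N(x)$ is nondegenerate (always true) and $\bigl(Y,\nabla^2 H_N(x)\bigr)$ is nondegenerate. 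The latter holds if and only if $\sigma_Y^2=\Var(Y)>0$ and the conditional law of $\nabla^2 H_N(x)$ given $Y$ is nondegenerate. If $\sigma_Y^2=0$ then $W$ is already degenerate and the quantities $\alpha,\beta,d_1,d_2,d_3$, hence $M$ itself, are undefined, so we may assume $\sigma_Y^2>0$, under which $M$ is a well-defined $\operatorname{SGOI}(d_1,d_2,d_3)$ matrix.

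Finally I would invoke the identity $\bigl(\nabla^2 H_N(x)\mid Y=u\bigr)\stackrel{d}{=}\mathrm{Diag}(m_1,m_2\mathbf{I}_{N-1})+2\sqrt{-D_N''(0)}\,M$ recorded above: the conditional covariance does not depend on $u$ and equals $-4D_N''(0)$ times the covariance matrix of the entries of $M$, so it is nondegenerate if and only if $M$ is, since the deterministic shift $\mathrm{Diag}(m_1,m_2\mathbf{I}_{N-1})$ and the nonzero scalar $2\sqrt{-D_N''(0)}$ are immaterial. Chaining these equivalences shows that \eqref{nondg} holds if and only if $W$ is nondegenerate, if and only if $\bigl(Y,\nabla^2 H_N(x)\bigr)$ is nondegenerate, if and only if $M$ is nondegenerate, which is the assertion. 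The only points that need care are the bookkeeping of the degenerate case $\sigma_Y^2=0$ and the elementary fact that an invertible affine change of the matrix entries preserves nondegeneracy; everything else is immediate from the constructions already in hand. Alternatively one could substitute \eqref{d123} into Lemma~\ref{NonPGOI} and match the two resulting inequalities with \eqref{nondg} term by term, but that route is considerably more laborious.
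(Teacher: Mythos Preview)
Your argument is correct and takes a genuinely different route from the paper's. The paper proceeds exactly along the ``laborious'' alternative you mention at the end: it substitutes the values \eqref{d123} into the two inequalities of Lemma~\ref{NonPGOI}, first showing that under $d_1>-\tfrac{1}{N-1}$ the condition $1+d_3+\tfrac{d_1+2d_2-(N-1)d_2^2}{1+(N-1)d_1}>0$ rewrites (after multiplying through and using $d_1d_3-d_2^2=\tfrac12 d_3$) as \eqref{nondg}, and then separately checking that \eqref{nondg} forces $d_1>-\tfrac{1}{N-1}$ via a completion-of-squares trick on the quadratic in $\alpha r$ and $\beta$.

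Your approach is more structural: it reads both sides as statements about the nondegeneracy of the same Gaussian vector viewed through two different coordinate systems, and exploits the already-recorded facts that $Y,\nabla^2 H_N(x)\perp\nabla H_N(x)$ and $(\nabla^2 H_N(x)\mid Y)\stackrel{d}{=}\text{shift}+2\sqrt{-D_N''(0)}\,M$. This bypasses all the algebra and makes transparent \emph{why} the equivalence holds --- the SGOI parameters were engineered precisely so that $M$ encodes the conditional Hessian. The paper's computation, while heavier, has the minor advantage of being self-contained (it does not appeal to the conditional representation, which in the paper is imported from \cite{AZ20}) and of exhibiting the algebraic identity $d_1d_3-d_2^2=\tfrac12 d_3$ that recurs later in \eqref{c}. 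Your handling of the boundary case $\sigma_Y^2=0$ is adequate: the lemma tacitly presumes $M$ is defined, and the paper's own proof makes the same implicit assumption.
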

\begin{proof}
    According to Lemma \ref{NonPGOI}, it suffices to show that \eqref{nondg} is equivalent to
    \begin{equation*}
        d_1 > -\frac{1}{N-1}, \qquad \qquad
        1 + d_3 + \frac{d_1 + 2d_2 - (N-1) d_2^2}{1 + (N-1) d_1} > 0,
    \end{equation*}
    where
   $d_1 = \frac{1}{2}+\frac{\beta (r) ^2}{4D_N'' (0)},  d_2 = \frac{\alpha (r) \beta (r) r}{4D_N'' (0)}, 
    d_3 = \frac{\alpha (r)^2 r^2}{4D_N'' (0)}$
   with $\alpha(r)$ and $\beta(r)$ defined in \eqref{eq:msialbt}, and we have replaced $\lVert x \rVert^2$ with $r$ in the definitions of $d_1$, $d_2$ and $d_3$ for simplicity.
    
  If $d_1 > -\frac{1}{N-1}$, straightforward calculations show that $1 + d_3 + \frac{d_1 + 2d_2 - (N-1) d_2^2}{1 + (N-1) d_1} > 0$ is equivalent to \eqref{nondg}. To be precise, multiplying both sides by $1 + (N-1) d_1$ implies
\begin{equation*}
    1 + d_3 + (N-1) (d_1 d_3 - d_2^2) + 2 d_2 + N d_1 > 0.
\end{equation*}
Note that $d_1 d_3 - d_2^2 = \frac{1}{2} d_3$ and
\begin{equation*}
    1 + \frac{N+1}{2} d_3 + 2 d_2 + N d_1
    = 1 + \frac{N}{2} + \frac{(N+1) \alpha (r)^2 r^2}{8 D_N'' (0)} + \frac{\alpha (r) \beta (r) r}{2 D_N'' (0)} + \frac{N \beta (r)^2}{4 D_N'' (0)} > 0.
\end{equation*}
Multiplying both sides by $\frac{2}{N+2} ( D_N(r)-\frac{D_N'(r)^2 r}{D_N'(0)} )$,
we get
\begin{equation*}
    D_N(r)-\frac{D_N'(r)^2 r}{D_N'(0)} + \frac{(N+1) D_N'' (r)^2 r^2}{(N+2) D_N'' (0)} + \frac{2 r D_N'' (r) (D_N' (r) - D_N' (0))}{(N+2) D_N'' (0)} + \frac{(D_N' (r) - D_N' (0))^2}{2 (N+2) D_N'' (0)} > 0,
\end{equation*}
which is exactly \eqref{nondg}. Notice that each step of above derivations is reversible. Therefore, \eqref{nondg} is equivalent to $1 + d_3 + \frac{d_1 + 2d_2 - (N-1) d_2^2}{1 + (N-1) d_1} > 0$, on condition that $d_1 > -\frac{1}{N-1}$.

It remains to show that $d_1 > -\frac{1}{N-1}$ follows from \eqref{nondg}. Dividing both sides of \eqref{nondg} by $\frac{2}{N+2} ( D_N(r)-\frac{D_N'(r)^2 r}{D_N'(0)} )$ gives that
\begin{equation*}
    1 + \frac{N}{2} + \frac{(N+1) \alpha (r)^2 r^2}{8 D_N'' (0)} + \frac{\alpha (r) \beta (r) r}{2 D_N'' (0)} + \frac{N \beta (r)^2}{4 D_N'' (0)} > 0,
\end{equation*}
which is equivalent to
\begin{equation*}
    1 + \frac{N}{2} + \frac{(N+2) (N-1) \beta (r)^2}{4 (N+1) D_N'' (0)} + \frac{[(N+1) \alpha (r) r + 2 \beta (r)]^2}{8 (N+1) D_N'' (0)} > 0.
\end{equation*}
Since $D_N'' (0) < 0$ implies $\frac{[(N+1) \alpha (r) r + 2 \beta (r)]^2}{8 (N+1) D_N'' (0)} < 0$, we obtain
\begin{equation*}
    1 + \frac{N}{2} + \frac{(N+2) (N-1) \beta (r)^2}{4 (N+1) D_N'' (0)} > 0.
\end{equation*}
Multiplying both sides by $\frac{N+1}{(N+2)(N-1)}$  implies
\begin{equation*}
    \frac{N+1}{2(N-1)} + \frac{\beta (r)^2}{4 D_N'' (0)} = \frac{1}{N-1} + \frac{1}{2} + \frac{\beta (r)^2}{4 D_N'' (0)} > 0,
\end{equation*}
which shows $d_1 > -\frac{1}{N-1}$ and thus completes the proof.  
\end{proof}

Due to the rotational symmetry of isotropy, we are interested in the critical points in the shell domain
$$T_{N}\left(R_{1}, R_{2}\right)=\{x \in \mathbb{R}^{N}: R_{1}<\|x\|<R_{2}\},\quad  0 \leq R_{1}<R_{2} <\infty,$$
with critical values in an arbitrary Borel subset $E$. For simplicity, we write $\operatorname{Crt}_{N}\left(E,\left(R_{1}, R_{2}\right)\right):=$ $\operatorname{Crt}_{N}\left(E, T_{N}\left(R_{1}, R_{2}\right)\right)$. Together with \eqref{Critical1} and \eqref{Critical2}, using the spherical coordinates and writing $\rho=\|x\|$, we arrive at
\begin{align}
\mathbb{E} [\operatorname{Crt}_N\left(E, \left(R_{1}, R_{2}\right)\right)]
&=S_{N-1}\int_{R_1}^{R_2} \int_{E}\mathbb{E}\left[\left|\operatorname{det} G\right| \right] \frac{1}{\sqrt{2 \pi} \sigma_{Y}} e^{-\frac{u^{2}}{2 \sigma_{Y}^{2}}}p_{\nabla H_N(x)}(0) \rho^{N-1} \, \dd u \, \mathrm{d} \rho, \label{R1R2_1} \\
\mathbb{E} [\operatorname{Crt}_{N,k} \left(E, \left(R_{1}, R_{2}\right)\right)]
&=S_{N-1}\int_{R_1}^{R_2} \int_{E}\mathbb{E}\left[\left|\operatorname{det} G\right|  \mathbf{1}\{ i(G) = k \} \right] \frac{1}{\sqrt{2 \pi} \sigma_{Y}} e^{-\frac{u^{2}}{2 \sigma_{Y}^{2}}} \notag \\
&\phantom{XXXXXXXXXXXXXXXXXXXXXx} \times p_{\nabla H_N(x)}(0) \rho^{N-1} \, \dd u \, \mathrm{d} \rho, \label{R1R2_2}
\end{align}
where $S_{N-1} = \frac{2 \pi^{N / 2}} { \Gamma(N / 2)}$ is the area of $(N-1)$-dimensional unit sphere; see \cite[(4.1)]{AZ20} for details.

Unlike the isotropic case in \cite{CS18}, here it is difficult to find the joint eigenvalue density of the matrix $G$ due to the lack of invariance property. However, we observe that conditioning on the first entry of an SGOI($d_1,d_2,d_3$) matrix, its lower right $(N-1)\times(N-1)$ submatrix has the same distribution as a GOI$(c)$ matrix for some suitable $c$. We are now ready to state our main result in this section.

\begin{theorem} \label{Shell}
Let $H_N= \{H_N(x): x \in \mathbb{R}^N \}$ be a non-isotropic Gaussian field with isotropic increments. Let $E \subset \mathbb{R}$ be a Borel set and $T_N$ be the shell domain $T_{N}\left(R_{1}, R_{2}\right)=\{x \in \mathbb{R}^{N}: R_{1}<\|x\|<R_{2}\}$, where $0 \leq R_{1}<R_{2} < \infty$.
Assume Assumptions \ref{assumption1}, \ref{assumption2} and the nondegeneracy condition \eqref{nondg} holds for all $r>0$. Then we have
\begin{align*}
    &\mathrel{\phantom{=}} \mathbb{E} [\operatorname{Crt}_N\left(E, \left(R_{1}, R_{2}\right)\right)] \\
    & =\frac{2\left(-2D_N''(0)\right)^{N/2}}{ D_N^{\prime}(0)^{N / 2}\Gamma(N / 2)}\int_{R_1}^{R_2} \int_{E}\int_{-\infty}^{\infty}\frac{e^{-\frac{2D_N''(0)y^{2}}{ 6D_N''(0)+(\beta+\alpha\rho^2)^2}}}{\sqrt{2 \pi [-6D_N''(0)-(\beta+\alpha\rho^2)^2]}}  \frac{ e^{-\frac{u^{2}}{2 \sigma_{Y}^{2}}}}{\sqrt{2 \pi} \sigma_{Y}} \rho^{N-1} \, \dd y \, \dd u \, \mathrm{d} \rho \\
    & \quad\times \mathbb{E}_Z\left\{ \mathbb{E}_{\mathrm{GOI}(c)} \left[ \left|\left(m_1+2\sqrt{-D_N^{\prime \prime}(0)}y\right)\prod_{j=1}^{N-1}(\lambda_j+m_3)-\sqrt{-D_N^{\prime \prime}(0)}\sum_{l=1}^{N-1} Z_l^2 \prod_{j \neq l}^{N-1}(\lambda_j+m_3)\right| \right] \right\},
\end{align*}
and for $k=0,1,\dots,N$,
\begin{align*}
    &\mathrel{\phantom{=}} \mathbb{E} [\operatorname{Crt}_{N,k} \left(E, \left(R_{1}, R_{2}\right)\right)] \\
    & =\frac{2\left(-2D_N''(0)\right)^{N/2}}{ D_N^{\prime}(0)^{N / 2}\Gamma(N / 2)}\int_{R_1}^{R_2} \int_{E}\int_{-\infty}^{\infty}\frac{e^{-\frac{2D_N''(0)y^{2}}{ 6D_N''(0)+(\beta+\alpha\rho^2)^2}}}{\sqrt{2 \pi [-6D_N''(0)-(\beta+\alpha\rho^2)^2]}}  \frac{ e^{-\frac{u^{2}}{2 \sigma_{Y}^{2}}}}{\sqrt{2 \pi} \sigma_{Y}} \rho^{N-1} \, \dd y \, \dd u \, \mathrm{d} \rho \\
    &\; \; \times \mathbb{E}_{Z}\left\{ \mathbb{E}_{\mathrm{GOI}(c)} \left[ \left|\left(m_1+2\sqrt{-D_N^{\prime \prime}(0)}y\right)\prod_{j=1}^{N-1}(\lambda_j+m_3)-\sqrt{-D_N^{\prime \prime}(0)}\sum_{l=1}^{N-1} Z_l^2 \prod_{j \neq l}^{N-1}(\lambda_j+m_3)\right| \mathbf{1}_{A_k} \right] \right\},
\end{align*}
where
\begin{align*}
    A_k = \begin{cases}
        \left\{\lambda_k<-m_3<\lambda_{k+1}, \eta' (\Lambda)>0 \right\} \cup \left\{\lambda_{k-1}<-m_3<\lambda_{k}, \eta'(\Lambda)<0\right\}, &1\leq k \leq N-1, \\
        \left\{\lambda_1 > -m_3, \eta' (\Lambda)>0 \right\}, & k = 0,\\
        \left\{\lambda_{N-1} < -m_3, \eta' (\Lambda)<0 \right\}, & k = N,
    \end{cases}
\end{align*}
$\eta' (\Lambda)$ is defined in \eqref{eta'}, $m_1$ and $\sigma_Y$ are given in \eqref{eq:msialbt}, $c$ is defined in \eqref{c}, $m_3$ is defined in \eqref{m3}, $Z = (Z_1, \ldots, Z_{N-1} )$ is an $(N-1)$-dimensional standard Gaussian random vector independent of $\la_j$'s, and by convention $\lambda_0=-\infty$, $\lambda_N=\infty$. We emphasize that the expectation $\mathbb{E}_{\mathrm{GOI}(c)}$ is taken with respect to the GOI$(c)$ eigenvalues $\lambda_i$'s.
\end{theorem}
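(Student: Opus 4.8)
The plan is to push the Kac--Rice identities \eqref{R1R2_1} and \eqref{R1R2_2} — which for fixed $\rho=\|x\|$ and $u$ already reduce everything to $\mathbb E[|\det G|]$ and $\mathbb E[|\det G|\,\mathbf 1\{i(G)=k\}]$ — down to the stated closed forms by exploiting the block structure \eqref{G} of $G$ and conditioning on its $(1,1)$-entry. First I would expand $\det G$ by the Schur complement formula relative to the lower-right block $G_{**}$, which gives $\det G=\det G_{**}\cdot\eta(\zeta_1,G_{**})$ with $\eta$ the scalar defined just before Lemma \ref{sgn}; the Schur cross term $4D_N''(0)\,\xi^{\mathsf T}G_{**}^{-1}\xi$ is exactly the last summand of $\eta$.

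Next I would condition on $\zeta_1=y$. By \eqref{y}, conditionally $G_{**}\stackrel d{=}2\sqrt{-D_N''(0)}(M'+m_3\mathbf I_{N-1})$ with $M'$ a $\mathrm{GOI}(c)$ matrix, $c$ from \eqref{c}, $m_3$ from \eqref{m3}, while $\xi$ stays $N(0,\tfrac12\mathbf I_{N-1})$ and independent of $(M',y)$ because in the construction \eqref{M} the first off-diagonal column is independent of the diagonal data. Using orthogonal invariance of $\mathrm{GOI}(c)$ through \eqref{rotation} together with the rotational invariance of $\xi$, I would replace $M'$ by its unordered eigenvalues and $\sqrt2\,\xi$ by a standard Gaussian vector $Z$ independent of those eigenvalues, so that $\det G_{**}\stackrel d{=}(2\sqrt{-D_N''(0)})^{N-1}\prod_j(\lambda_j+m_3)$ and $\eta(\zeta_1,G_{**})\stackrel d{=}\eta'(\Lambda)$ as in \eqref{eta'}. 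Multiplying and clearing denominators gives, conditionally on $\zeta_1=y$, that $|\det G|$ equals $(2\sqrt{-D_N''(0)})^{N-1}$ times the absolute value appearing inside $\mathbb E_{\mathrm{GOI}(c)}$ in the theorem (writing it as a single determinant-type expression also removes the apparent singularity at $\lambda_l=-m_3$).

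Then I would integrate over $\zeta_1$, whose law by \eqref{eq:covM} and \eqref{d123} is centered Gaussian with variance $(6D_N''(0)+(\beta+\alpha\rho^2)^2)/(4D_N''(0))$; one checks its density equals $2\sqrt{-D_N''(0)}$ times the $y$-kernel displayed in the theorem. Collecting this factor with the $(2\sqrt{-D_N''(0)})^{N-1}$ from $\det G_{**}$ and with $S_{N-1}=2\pi^{N/2}/\Gamma(N/2)$ and $p_{\nabla H_N(x)}(0)=(2\pi D_N'(0))^{-N/2}$ from \eqref{R1R2_1} reproduces exactly the prefactor $2(-2D_N''(0))^{N/2}/(D_N'(0)^{N/2}\Gamma(N/2))$ and the Gaussian factor, settling the formula for $\mathbb E[\operatorname{Crt}_N]$. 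For $\mathbb E[\operatorname{Crt}_{N,k}]$ I would carry the indicator $\mathbf 1\{i(G)=k\}$ through the same steps: Lemma \ref{sgn} rewrites $\{i(G)=k\}$ via $i(G_{**})$ and $\operatorname{sgn}\eta(\zeta_1,G_{**})$, and since the positive scalar $2\sqrt{-D_N''(0)}$ does not alter signs, conditionally on $\zeta_1=y$ one has $i(G_{**})=k$ iff $\lambda_k<-m_3<\lambda_{k+1}$ (with $\lambda_0=-\infty,\ \lambda_N=+\infty$), which combined with $\operatorname{sgn}\eta'(\Lambda)$ is precisely the event $A_k$; inserting $\mathbf 1_{A_k}$ and repeating the integration yields the second formula.

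I expect the main obstacle to be the conditioning step: one must verify that, conditionally on $\zeta_1$, the off-diagonal vector $\xi$ is still $N(0,\tfrac12\mathbf I_{N-1})$ and independent both of the $\mathrm{GOI}(c)$ matrix $M'$ and of its random diagonalizer $V$, so that the substitution $\sqrt2\,\xi\to Z\perp\Lambda$ and the passage to eigenvalues are valid; this rests on the specific independence structure of the SGOI construction \eqref{M}--\eqref{goicond}, and is the reason the argument needs no invariance property of the full matrix $G$ (which it lacks). Secondary, more routine points are the bookkeeping of the boundary cases $k=0,N$ in Lazutkin's identity and the strict positivity of $\Var(\zeta_1)$ — the latter being exactly where the nondegeneracy hypothesis \eqref{nondg} enters, via the equivalence proved in the lemma just before the theorem.
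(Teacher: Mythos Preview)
Your proposal is correct and follows essentially the same approach as the paper's own proof: Schur complement of $G$ relative to $G_{**}$, conditioning on $\zeta_1=y$ via \eqref{y} to reduce $G_{**}$ to a shifted $\mathrm{GOI}(c)$ matrix, orthogonal diagonalization \eqref{rotation} together with the rotational invariance of $\xi$ to pass to eigenvalues and an independent standard Gaussian $Z$, and Lemma \ref{sgn} for the index decomposition. The bookkeeping you sketch (the $2\sqrt{-D_N''(0)}$ absorbed from the $\zeta_1$-density, the $(2\sqrt{-D_N''(0)})^{N-1}$ from $\det G_{**}$, and the $S_{N-1}\,p_{\nabla H_N(x)}(0)$ factor) matches the paper's simplifications exactly, and your identification of the conditional-independence of $\xi$ and the role of \eqref{nondg} via the SGOI nondegeneracy lemma is precisely where the paper's argument rests as well.
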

\begin{proof}
Recall that $G = \left( \begin{smallmatrix}
      m_1 &0\\
0&m_2\mathbf{I}_{N-1}
\end{smallmatrix} \right) + 2\sqrt{-D_N^{\prime \prime}(0)}M$, where $M$ is an $\operatorname{SGOI} (d_1,d_2,d_3)$ matrix defined in \eqref{M} with parameters $d_1=\frac{1}{2}+\frac{\beta^2}{4D_N^{\prime \prime}(0)}$, $d_2=\frac{\alpha\beta\rho^2}{4D_N^{\prime \prime}(0)}$ and $d_3=\frac{\alpha^2\rho^4}{4D_N^{\prime \prime}(0)}$.
The Schur complement formula implies that
\begin{align}\label{detGG}
    \operatorname{det} G&=\left(m_1+2\sqrt{-D_N^{\prime \prime}(0)}\zeta_1+4D_N'' (0)\xi^{\top} G_{* *}^{-1} \xi\right)\det\left(G_{* *}\right) \nonumber \\
    &= \left\{ m_1+2\sqrt{-D_N^{\prime \prime}(0)}\zeta_1-2\sqrt{-D_N'' (0)}\xi^\mathsf T \left[ \left(\frac{m_2}{2\sqrt{-D_N'' (0)}}+\zeta_2\right)\mathbf{I}_{N-1}+\GOE_{N-1} \right]^{-1} \xi \right\} \nonumber \\
    &\quad \times2^{N-1}\left(-D_N'' (0)\right)^\frac{N-1}{2}\operatorname{det} \left(\left(\frac{m_2}{2\sqrt{-D_N'' (0)}}+\zeta_2\right)\mathbf{I}_{N-1}+\GOE_{N-1}\right).
\end{align}
Conditioning on $\zeta_1=y$, 
using \eqref{y}, \eqref{m3} and \eqref{detGG}, we obtain
\begin{equation} \label{EdetG}
    \begin{split}
&\mathbb{E}\left[\left|\operatorname{det} G\right| \right]
    = \int_{-\infty}^{\infty}\frac{2^{N-1} \left( -D_N'' (0) \right)^\frac{N-1}{2}}{\sqrt{2 \pi(1+d_1+2d_2+d_3)}} e^{-\frac{y^{2}}{2 (1+d_1+2d_2+d_3)}} \\
    &\phantom{\operatorname{det} G} \times \mathbb{E} \left[ \left|\left(m_1+2\sqrt{-D_N^{\prime \prime}(0)}y-2\sqrt{-D_N^{\prime \prime}(0)}\xi^\mathsf T \left(M'+m_3\mathbf{I}_{N-1}\right)^{-1}\xi\right)\operatorname{det} \left(M'+m_3\mathbf{I}_{N-1}\right)\right| \right] \dd y,
    \end{split}
\end{equation}
where $1+d_1+2d_2+d_3=\frac{6D_N'' (0) + (\beta+\alpha\rho^2)^2}{4D_N'' (0)}$ and $M'$ is the $(N-1)\times (N-1)$ GOI$(c)$ matrix with parameter $c$ defined in \eqref{c}.
By \eqref{rotation} and the independence of $Z$ and $\lambda_i$'s, we deduce
\begin{equation} \label{EdetGG}
    \begin{split}
    &\mathrel{\phantom{=}} \mathbb{E}\left[\left|\left(m_1+2\sqrt{-D_N^{\prime \prime}(0)}y-2\sqrt{-D_N^{\prime \prime}(0)}\xi^\mathsf T \left( M' + m_3\mathbf{I}_{N-1}\right)^{-1}\xi\right)\operatorname{det} \left( M' + m_3\mathbf{I}_{N-1}\right)\right| \right] \\
    &=\mathbb{E}\left[\left|\left(m_1+2\sqrt{-D_N^{\prime \prime}(0)}y\right)\prod_{j=1}^{N-1}(\lambda_j+m_3)-\sqrt{-D_N^{\prime \prime}(0)}\sum_{l=1}^{N-1} Z_l^2 \prod_{j \neq l}^{N-1}(\lambda_j+m_3)\right| \right] \\
    &=\mathbb{E}_{Z} \left\{ \mathbb{E}_{\mathrm{GOI}(c)} \left[ \left| \left(m_1+2\sqrt{-D_N^{\prime \prime}(0)}y\right)\prod_{j=1}^{N-1}(\lambda_j+m_3)-\sqrt{-D_N^{\prime \prime}(0)}\sum_{l=1}^{N-1} Z_l^2 \prod_{j \neq l}^{N-1}(\lambda_j+m_3)\right| \right] \right\}.
    \end{split}
\end{equation}
Combining  \eqref{R1R2_1}, \eqref{EdetG} and \eqref{EdetGG}, after some simplifications we obtain
\begin{align*}
&\mathrel{\phantom{=}} \mathbb{E} [\operatorname{Crt}_N\left(E, \left(R_{1}, R_{2}\right)\right)] \\
& =\frac{2\left(-2D_N''(0)\right)^{N/2}}{ D_N^{\prime}(0)^{N / 2}\Gamma(N / 2)}\int_{R_1}^{R_2} \int_{E}\int_{-\infty}^{\infty}\frac{e^{-\frac{2D_N''(0)y^{2}}{ 6D_N''(0)+(\beta+\alpha\rho^2)^2}}}{\sqrt{2 \pi [-6D_N''(0)-(\beta+\alpha\rho^2)^2]}}  \frac{ e^{-\frac{u^{2}}{2 \sigma_{Y}^{2}}}}{\sqrt{2 \pi} \sigma_{Y}} \rho^{N-1} \, \dd y \, \dd u \, \dd \rho \\
& \quad \times \mathbb{E}_{Z}\left\{ \mathbb{E}_{\mathrm{GOI}(c)} \left[ \left|\left(m_1+2\sqrt{-D_N^{\prime \prime}(0)}y\right)\prod_{j=1}^{N-1}(\lambda_j+m_3)-\sqrt{-D_N^{\prime \prime}(0)}\sum_{l=1}^{N-1} Z_l^2 \prod_{j \neq l}^{N-1}(\lambda_j+m_3)\right| \right] \right\},
\end{align*}
which is the desired result for the first part.

Let us turn to the second assertion.
For $1\leq k \leq N-1$, combining \eqref{eta'}, \eqref{detGG}, \eqref{EdetG} and \eqref{EdetGG} gives
\begin{align*}
    &\mathrel{\phantom{=}} \mathbb{E} \left[ \left|\operatorname{det} G\right|  \mathbf{1}\{ i(G) = k \} \right] \\
    &= \mathbb{E} \left[ \left|\operatorname{det} G\right|  \left( \mathbf{1}\{ i\left(G_{* *}\right)=k, \eta (\zeta_1,G_{**}) >0 \} + \mathbf{1}\{ i\left(G_{* *}\right)=k-1, \eta (\zeta_1,G_{**}) <0 \} \right) \right] \\
    &= \int_{-\infty}^{\infty} \frac{2^{N-1} \left( -D_N'' (0) \right)^\frac{N-1}{2}}{\sqrt{2 \pi(1+d_1+2d_2+d_3)}} e^{-\frac{y^{2}}{2 (1+d_1+2d_2+d_3)}} \\
    &\quad \times \mathbb{E}_Z \left\{ \mathbb{E}_{\mathrm{GOI}(c)} \left[ \left| \left(m_1+2\sqrt{-D_N'' (0)}y\right)\prod_{j=1}^{N-1}(\lambda_j+m_3)-\sqrt{-D_N'' (0)}\sum_{l=1}^{N-1} Z_l^2 \prod_{j \neq l}^{N-1}(\lambda_j+m_3)\right| \right. \right. \\
    &\phantom{XXXXXXX} \left. \left. \vphantom{\left| \prod_{j=1}^{N-1} \right|} \times \left( \mathbf{1} \left\{\lambda_k<-m_3<\lambda_{k+1}, \eta' (\Lambda)>0 \right\} + \mathbf{1} \left\{\lambda_{k-1}<-m_3<\lambda_{k}, \eta'(\Lambda)<0\right\} \right) \right] \right\} \dd y.
\end{align*}
Plugging the above equation into \eqref{R1R2_2} yields that
\begin{align*}
    &\mathrel{\phantom{=}} \mathbb{E} [\operatorname{Crt}_{N,k}\left(E, \left(R_{1}, R_{2}\right)\right)] \\
    &= \frac{2\left(-2D_N''(0)\right)^{N/2}}{ D_N^{\prime}(0)^{N / 2}\Gamma(N / 2)}\int_{R_1}^{R_2} \int_{E}\int_{-\infty}^{\infty}\frac{e^{-\frac{2D_N''(0)y^{2}}{ 6D_N''(0)+(\beta+\alpha\rho^2)^2}}}{\sqrt{2 \pi [-6D_N''(0)-(\beta+\alpha\rho^2)^2]}}  \frac{ e^{-\frac{u^{2}}{2 \sigma_{Y}^{2}}}}{\sqrt{2 \pi} \sigma_{Y}} \rho^{N-1} \, \dd y \, \dd  u \, \dd \rho \\
    & \quad\times \mathbb{E}_{Z}\left\{\mathbb{E}_{\mathrm{GOI}(c)}\left[\left|\left(m_1+2\sqrt{-D_N^{\prime \prime}(0)}y\right)\prod_{j=1}^{N-1}(\lambda_j+m_3)-\sqrt{-D_N^{\prime \prime}(0)}\sum_{l=1}^{N-1} Z_l^2 \prod_{j \neq l}^{N-1}(\lambda_j+m_3)\right| \mathbf{1}_{A_k} \right] \right\},
\end{align*}
where $A_k = \{ \lambda_k<-m_3<\lambda_{k+1}, \eta' (\Lambda)>0 \} \cup \{ \lambda_{k-1}<-m_3<\lambda_{k}, \eta'(\Lambda)<0 \}$ for $1\leq k \leq N-1$.

Similarly, for $k=0$ we have
\begin{align*}
    &\mathrel{\phantom{=}} \mathbb{E} \left[ \left|\operatorname{det} G\right|  \mathbf{1}\{ i(G) = 0 \} \right] \\
    &= \mathbb{E} \left[ \left|\operatorname{det} G\right|  \mathbf{1}\{ i\left(G_{* *}\right)=0, \eta (\zeta_1,G_{**}) >0 \} \right] \\
    &= \int_{-\infty}^{\infty} \frac{2^{N-1} \left( -D_N'' (0) \right)^\frac{N-1}{2}}{\sqrt{2 \pi(1+d_1+2d_2+d_3)}} e^{-\frac{y^{2}}{2 (1+d_1+2d_2+d_3)}} \mathbb{E}_Z \left\{ \mathbb{E}_{\mathrm{GOI}(c)} \left[ \vphantom{\left| \prod_{j=1}^{N-1} \right|} \mathbf{1} \left\{\lambda_1 > -m_3, \eta' (\Lambda)>0 \right\} \right. \right. \\
    &\quad \times \left. \left. \left| \left(m_1+2\sqrt{-D_N'' (0)}y\right)\prod_{j=1}^{N-1}(\lambda_j+m_3)-\sqrt{-D_N'' (0)}\sum_{l=1}^{N-1} Z_l^2 \prod_{j \neq l}^{N-1}(\lambda_j+m_3)\right| \right] \right\} \dd y,
\end{align*}
and
\begin{align*}
    &\mathrel{\phantom{=}} \mathbb{E} [\operatorname{Crt}_{N,0}\left(E, \left(R_{1}, R_{2}\right)\right)] \\
    &= \frac{2\left(-2D_N''(0)\right)^{N/2}}{ D_N^{\prime}(0)^{N / 2}\Gamma(N / 2)}\int_{R_1}^{R_2} \int_{E}\int_{-\infty}^{\infty}\frac{e^{-\frac{2D_N''(0)y^{2}}{ 6D_N''(0)+(\beta+\alpha\rho^2)^2}}}{\sqrt{2 \pi [-6D_N''(0)-(\beta+\alpha\rho^2)^2]}}  \frac{ e^{-\frac{u^{2}}{2 \sigma_{Y}^{2}}}}{\sqrt{2 \pi} \sigma_{Y}} \rho^{N-1} \, \dd y \, \dd  u \, \dd \rho \\
    & \quad\times \mathbb{E}_Z \left\{ \mathbb{E}_{\mathrm{GOI}(c)} \left[ \left|\left(m_1+2\sqrt{-D_N^{\prime \prime}(0)}y\right)\prod_{j=1}^{N-1}(\lambda_j+m_3)-\sqrt{-D_N^{\prime \prime}(0)}\sum_{l=1}^{N-1} Z_l^2 \prod_{j \neq l}^{N-1}(\lambda_j+m_3)\right| \mathbf{1}_{A_0} \right] \right\},
\end{align*}
where $A_0 = \{\lambda_1 > -m_3, \eta' (\Lambda)>0 \}$. We omit the proof for index $k=N$ here, since it is similar to the case $k=0$.
\end{proof}

\subsection{Representation with GOE matrices}
In order for the large $N$ asymptotic analysis, it is desirable to write the GOI matrix in the above representation as a sum of a GOE matrix and an independent scalar matrix. Most of results in this subsection follow from arguments similar to those in \cite{AZ20,AZ22}. First, as observed in these works, if the critical values are not restricted (i.e., $E=\rz$), there is no need to consider the conditional distribution of Hessian and it is straightforward to employ GOE matrices for the Kac--Rice representation.
\begin{theorem}\label{ER}
Let $H_N=\left\{H_N(x): x \in \mathbb{R}^N\right\}$ be a non-isotropic Gaussian field with isotropic increments and $T_N$ be a Borel subset of $\mathbb{R}^{N}$.
Assume Assumptions \ref{assumption1} and \ref{assumption2}, we have
\begin{align*}
 \mathbb{E} [\operatorname{Crt}_N\left( \mathbb{R}, T_N\right)] &= \frac{\left(-2D_N''(0)\right)^{N/2}|T_N|}{\pi^{(N+1)/2} D_N^{\prime}(0)^{N / 2}}\int_{-\infty}^{\infty}\mathbb{E}_{\mathrm{GOE}}\left[ \prod_{j=1}^N\left|\lambda_j+y\right|\right]e^{-y^2}\mathrm{d} y, \\
\mathbb{E} [\operatorname{Crt}_{N,k}\left(\mathbb{R},T_N\right)] &= \frac{\left(-2D_N''(0)\right)^{N/2}|T_N|}{\pi^{(N+1)/2} D_N^{\prime}(0)^{N / 2}}  \int_{-\infty}^{\infty} \mathbb{E}_{\mathrm{GOE}} \left[ \prod_{j=1}^N\left|\lambda_j+y\right|\mathbf{1}{ \{\lambda_k<-y<\lambda_{k+1}\} }\right] e^{-y^2}  \mathrm{d} y, 
\end{align*}
where $|T_N|$ is the Lebesgue measure of $T_N$, $0\leq k\leq N,$ $\lambda_1 \leq \cdots \leq \lambda_N$ are the ordered eigenvalues of the GOE matrix $\GOE_{N}$, and by convention $\lambda_0=-\infty$, $\lambda_{N+1}=\infty$.
\end{theorem}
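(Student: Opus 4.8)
The key simplification is that taking $E=\mathbb{R}$ removes the indicator $\mathbf{1}\{Y\in E\}$ from the Kac--Rice representations \eqref{Critical1} and \eqref{Critical2}, so one need not condition the Hessian on the value of $Y$ and may work directly with the unconditional law of $\nabla^2 H_N(x)$. I would first note that \eqref{Critical1} and \eqref{Critical2} are available here under Assumptions \ref{assumption1} and \ref{assumption2} alone: their derivation uses only the smoothness from Assumption \ref{assumption1}, the invertibility of $\Sigma_{11}=D_N'(0)\mathbf{I}_N$, and the fact, read off from Lemma \ref{Corv} via $\operatorname{Cov}[\partial_k H_N(x),\partial_{ij}H_N(x)]=0$, that $\nabla^2 H_N(x)$ and $Y$ are independent of $\nabla H_N(x)$; the stronger condition \eqref{nondg} is never invoked because $Y$ does not enter. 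Setting $E=\mathbb{R}$ in \eqref{Critical1} therefore gives
\begin{equation*}
\mathbb{E}[\operatorname{Crt}_N(\mathbb{R},T_N)]=\int_{T_N}\mathbb{E}\bigl[\,|\det\nabla^2 H_N(x)|\,\bigr]\,p_{\nabla H_N(x)}(0)\,\dd x ,
\end{equation*}
and \eqref{Critical2} with $E=\mathbb{R}$ inserts the factor $\mathbf{1}\{i(\nabla^2 H_N(x))=k\}$ inside the expectation.

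Next I would use the identification recorded right after \eqref{Critical2}, that $\nabla^2 H_N(x)$ has the same law as $2\sqrt{-D_N''(0)}\,\operatorname{GOI}(\tfrac12)$ for every $x$. Since $c=\tfrac12>0$, a $\operatorname{GOI}(\tfrac12)$ matrix decomposes as $\GOE_N+\zeta\,\mathbf{I}_N$ with $\zeta\sim N(0,\tfrac12)$ independent of $\GOE_N$, as recalled in the introduction; hence $\nabla^2 H_N(x)\stackrel{d}{=}2\sqrt{-D_N''(0)}\,(\GOE_N+\zeta\,\mathbf{I}_N)$, and writing $\lambda_1\le\cdots\le\lambda_N$ for the eigenvalues of $\GOE_N$,
\begin{equation*}
|\det\nabla^2 H_N(x)|\stackrel{d}{=}\bigl(2\sqrt{-D_N''(0)}\bigr)^N\prod_{j=1}^N|\lambda_j+\zeta| .
\end{equation*}
Taking expectations (Tonelli applies since everything is nonnegative), using that $\zeta$ has density $\pi^{-1/2}e^{-y^2}$, substituting $p_{\nabla H_N(x)}(0)=(2\pi D_N'(0))^{-N/2}$ and $\int_{T_N}\dd x=|T_N|$, and collecting the prefactor
\begin{equation*}
(2\pi D_N'(0))^{-N/2}\bigl(2\sqrt{-D_N''(0)}\bigr)^N\pi^{-1/2}=\frac{(-2D_N''(0))^{N/2}}{\pi^{(N+1)/2}\,D_N'(0)^{N/2}}
\end{equation*}
gives the first claimed formula.

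For the index-$k$ count the only extra observation is that multiplying by the positive scalar $2\sqrt{-D_N''(0)}$ does not change the number of negative eigenvalues, so $i(\nabla^2 H_N(x))=\#\{j:\lambda_j+\zeta<0\}=\#\{j:\lambda_j<-\zeta\}$, which equals $k$ exactly when $\lambda_k<-\zeta<\lambda_{k+1}$ (with the conventions $\lambda_0=-\infty$, $\lambda_{N+1}=\infty$). Inserting $\mathbf{1}\{\lambda_k<-\zeta<\lambda_{k+1}\}$ before integrating out $\zeta$ reproduces the second formula, and summing over $k$ recovers the first. I do not anticipate a genuine obstacle: the content is the reduction ``$E=\mathbb{R}$ removes the conditioning,'' the standard GOE-plus-independent-scalar decomposition of $\operatorname{GOI}(\tfrac12)$, and the bookkeeping of normalizing constants; the only point worth stating explicitly is why \eqref{nondg} is dispensable here, namely that $\operatorname{Cov}(\nabla H_N(x))=D_N'(0)\mathbf{I}_N$ is already nondegenerate and $Y$ is never used.
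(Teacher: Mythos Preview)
Your proposal is correct and is exactly the argument the paper has in mind: the paper omits the proof, pointing to \cite[Theorem 1.1]{AZ20} and \cite[Theorems 1.1, 1.2]{AZ22}, and the route there is precisely the one you outline---drop the conditioning on $Y$ since $E=\mathbb{R}$, use $\nabla^2 H_N(x)\stackrel{d}{=}2\sqrt{-D_N''(0)}\,\operatorname{GOI}(\tfrac12)$, split $\operatorname{GOI}(\tfrac12)=\GOE_N+\zeta\,\mathbf{I}_N$ with $\zeta\sim N(0,\tfrac12)$ independent, and collect constants. Your remark that only the gradient nondegeneracy $\operatorname{Cov}(\nabla H_N(x))=D_N'(0)\mathbf{I}_N$ is needed (not \eqref{nondg}) is the right justification for why Assumptions \ref{assumption1} and \ref{assumption2} alone suffice.
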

The proof is omitted here since it is the same as for \cite[Theorem 1.1]{AZ20} and \cite[Theorems 1.1, 1.2]{AZ22}. As a comparison to the isotropic case, we give an example below, which is analogous to {\cite[Example 3.8]{CS18}}. 
\begin{example}
Let the assumptions in Theorem \ref{ER} hold. Consider $N=2$ and $T_2 \subset \mathbb{R}^2$ with unit area. Using  Theorem \ref{ER}, we obtain
$$\mathbb{E} [\operatorname{Crt}_{2,k}\left(\mathbb{R},T_2\right)] = \frac{-2D_2''(0)}{\pi^{3/2} D_2^{\prime}(0)}  \int_{-\infty}^{\infty} \mathbb{E}_{\mathrm{GOE}} \left[ \prod_{j=1}^2\left|\lambda_j+y\right|\mathbf{1}{ \{\lambda_k<-y<\lambda_{k+1}\} }\right] e^{-y^2}  \mathrm{d} y.$$  
Replacing the $\mathrm{GOE}$ density with the $\mathrm{GOI(c)}$ density with $c=\frac{1}{2}$, we deduce
$$\mathbb{E} [\operatorname{Crt}_{2,k}\left(\mathbb{R},T_2\right)] = \frac{-2D_2''(0)}{\pi D_2^{\prime}(0)}  \mathbb{E}_{\mathrm{GOI}(\frac{1}{2})} \left[ \prod_{j=1}^2\left|\lambda_j\right|\mathbf{1}{ \{\lambda_k<0<\lambda_{k+1}\} }\right].$$
Plugging the $\mathrm{GOI}(c)$ density (\ref{eq:goidens}) with $N=2$ and $c=\frac{1}{2}$ into the above equality implies
$$
\mathbb{E} [\operatorname{Crt}_{2,0}\left(\mathbb{R},T_2\right)]
= \mathbb{E} [\operatorname{Crt}_{2,2}\left(\mathbb{R},T_2\right)]
= \frac{1}{2} \mathbb{E}[\operatorname{Crt}_{2,1}\left(\mathbb{R},T_2\right)]
= \frac{-D_2''(0)}{\sqrt{3}\pi D_2'(0)}.$$  
\end{example}

The situation is more involved when the critical values are constrained.
The following condition for any fixed $N$ turns out to be sufficient for using GOE matrices in the representation.

\begin{assumption} \label{assumption3}
    For any $r>0$, we have 
\begin{align} \label{ar}
-2 D_N^{\prime \prime}(0) &>\left(\alpha r+\beta\right) \beta, \\
-4 D_N^{\prime \prime}(0) &>\left(\alpha r+\beta\right) \alpha r, \label{be}\\
\alpha\beta &>0, \nonumber
\end{align}
where $\alpha=\alpha\left(r \right)$ and $\beta=\beta\left(r \right)$ are defined in \eqref{eq:msialbt} with $\rho^2$ replaced by $r$.
\end{assumption}

This condition was used in \cite{AZ20} for the structure functions in the class $\dx_\8$, and in that case we trivially have $\alpha(r)\beta(r)>0$ for any $r>0$. Actually, we will show that Assumption \ref{assumption3} holds for all structure functions in $\dx_\8$ in the next section. 
A natural question is the relationship between Assumption \ref{assumption3} and the nondegeneracy of the field and its derivatives. The following result shows that Assumption \ref{assumption3} is stronger than the nondegeneracy condition \eqref{nondg}.
Recall the parameter $c=\frac{4D_N'' (0)+2\beta^2+\alpha^2r^2}{12D_N'' (0) + 2(\beta+\alpha r)^2}$ with $\|x\|^2$ replaced by $r$ from \eqref{c}. Assumption \ref{assumption3} also implies $c>0$.
\begin{lemma}
  For all fixed $N\in\nz$, if Assumption \ref{assumption3} holds, then we have $c>0$ and the Gaussian vector
\begin{equation*}
	\left(H_N(x), \nabla H_N(x), \partial_{i j} H_{N}(x),1\leq i\leq j\leq N\right),
\end{equation*}
is nondegenerate.
\end{lemma}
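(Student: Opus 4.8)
The plan is to deduce both conclusions from the criterion \eqref{nondg} of Lemma \ref{Nond}, after reducing everything to two polynomial inequalities in the quantities appearing in Assumption \ref{assumption3}. Fix $r=\|x\|^2>0$, write $\delta=D_N''(0)<0$, and set $a=\alpha(r)\,r$ and $b=\beta(r)$, where $\alpha(r),\beta(r)$ are as in \eqref{eq:msialbt} with $\rho^2$ replaced by $r$. Observe first that $\sigma_Y^2:=D_N(r)-D_N'(r)^2r/D_N'(0)>0$: otherwise $\alpha(r)$ and $\beta(r)$ would not be finite reals and the hypothesis $\alpha\beta>0$ would be meaningless. In particular $ab=\alpha(r)\beta(r)\,r>0$, so $a$ and $b$ have the same nonzero sign. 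From \eqref{c} we have $c=N_c/D_c$ with
\begin{equation*}
N_c:=4\delta+a^2+2b^2,\qquad D_c:=12\delta+2(a+b)^2,
\end{equation*}
so it suffices for the first assertion to show $N_c<0$ and $D_c<0$. For the second, since $D_N''(r)\,r=\tfrac12a\sigma_Y$ and $D_N'(r)-D_N'(0)=b\sigma_Y$, substituting into \eqref{nondg} and pulling out the factor $\sigma_Y^2/(4(N+2)\delta)$ shows that the left-hand side of \eqref{nondg} equals $\sigma_Y^2T/(4(N+2)\delta)$, where
\begin{equation*}
T:=4(N+2)\delta+(N+1)a^2+4ab+2Nb^2;
\end{equation*}
as $\sigma_Y^2>0$ and $4(N+2)\delta<0$, condition \eqref{nondg} is equivalent to $T<0$.

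Next I would translate Assumption \ref{assumption3}: rewriting \eqref{be} and \eqref{ar} gives
\begin{equation*}
4\delta+a^2+ab=-\sigma_1^2<0,\qquad 2\delta+ab+b^2=-\sigma_2^2<0,
\end{equation*}
with $\sigma_1,\sigma_2$ as in \eqref{eq:msialbt}. Summing these two equalities yields $6\delta+(a+b)^2=-(\sigma_1^2+\sigma_2^2)$, hence $D_c=2\bigl(6\delta+(a+b)^2\bigr)=-2(\sigma_1^2+\sigma_2^2)<0$. For $N_c$ one records the two identities
\begin{equation*}
N_c=-\sigma_1^2+b(2b-a)=-2\sigma_2^2+a(a-2b),
\end{equation*}
so $N_c<b(2b-a)$ and $N_c<a(a-2b)$. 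Since $a$ and $b$ have the same sign, these upper bounds cannot both be positive: if $b(2b-a)>0$ then $2b-a$ has the sign of $b$, hence of $a$, and so $a(a-2b)=-a(2b-a)<0$. Therefore one of the two bounds is $\le0$, whence $N_c<0$. This proves $c=N_c/D_c>0$.

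The nondegeneracy is then immediate from the algebraic identity
\begin{equation*}
T=(N-1)N_c+D_c,
\end{equation*}
obtained by expanding both sides. Since $N\ge1$, $N_c<0$ and $D_c<0$, it follows that $T<0$; equivalently \eqref{nondg} holds at $r=\|x\|^2$, so Lemma \ref{Nond} gives that $\bigl(H_N(x),\nabla H_N(x),\partial_{ij}H_N(x),\,1\le i\le j\le N\bigr)$ is nondegenerate.

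I expect the only step that is not purely mechanical to be the bound $N_c<0$. One checks that Assumption \ref{assumption3} does not allow one to bound $N_c$ (or $T$) above by a single nonnegative linear combination of \eqref{be} and \eqref{ar}, so a case distinction according to the sign of $2b-a=2\beta(r)-\alpha(r)r$ is unavoidable; the two representations of $N_c$ above are precisely what make this distinction transparent. The remaining ingredients — the reduction of \eqref{nondg} to $T<0$, the evaluation of $D_c$, and the identity $T=(N-1)N_c+D_c$ — are routine computations that I would carry out explicitly.
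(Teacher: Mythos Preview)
Your proof is correct and follows essentially the same route as the paper: both show $D_c<0$ by summing \eqref{ar} and \eqref{be}, and both obtain $N_c<0$ via the same case split on the sign of $2\beta-\alpha r$ (the paper phrases it as a split on the sign of $\alpha r(2\beta-\alpha r)$, which is equivalent since $\alpha\beta>0$). The only cosmetic difference is that the paper notes $N_c<0$ is exactly \eqref{larn} and invokes the second assertion of Lemma~\ref{Nond}, whereas your identity $T=(N-1)N_c+D_c$ gives \eqref{nondg} directly from $N_c<0$ and $D_c<0$.
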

\begin{proof}
 First of all, \eqref{ar} and \eqref{be} imply that the denominator of $c$ is smaller than $0$. To prove $c>0$, it remains to show that 
 \begin{align}\label{eq:dalbt}
 -4D_N''(0) > 2\beta^2 + \alpha^2r^2,
 \end{align}
 which is equivalent to the inequality (\ref{larn}) and thus further yields the nondegeneracy condition by Lemma \ref{Nond}. Since $\alpha \beta > 0$, we have $$(2 \alpha \beta r - \alpha^2 r^2) (\alpha \beta r - 2 \beta^2) = -\alpha \beta r (2 \beta - \alpha r)^2 \leq 0.$$ 
 If $2 \alpha \beta r - \alpha^2 r^2 \geq 0$, then \eqref{ar} implies that
 \begin{equation*}
     -4 D_N'' (0) > 2 \alpha \beta r + 2 \beta^2 \geq \alpha^2 r^2 + 2 \beta^2.
 \end{equation*}
 If $2 \alpha \beta r - \alpha^2 r^2 < 0$, we must have $\alpha \beta r - 2 \beta^2 \geq 0$ and \eqref{be} implies that
 \begin{equation*}
     -4 D_N'' (0) > \alpha^2 r^2 + \alpha \beta r \geq \alpha^2 r^2 + 2 \beta^2. \qedhere
 \end{equation*}
\end{proof}

Using this lemma, we may deduce the GOE representation from Theorem \ref{Shell} by unraveling  GOE eigenvalues and an independent Gaussian random variable from the GOI$(c)$ matrix. This is equivalent to considering the conditional distribution of $z_3'$ given $z_1'$ below in \pref{eq:cond}, which would bring in an extra random variable in \eqref{aN}. For transparency and for a different perspective compared with Theorem \ref{Shell}, here we provide formulas that are suitable for asymptotic analysis following \cite{AZ20,AZ22}. Since Assumption \ref{assumption3} implies $d_1>0$, according to \cite[Section 3]{AZ20}, the shifted SGOI matrix $G$ in ($\ref{G}$) can be represented as the following form, which corresponds to setting $\varsigma=d_1+d_2$ and $\vartheta=0$ in \eqref{eq:Xi} and in this case the condition \pref{eq:d1pos} is equivalent to \pref{eq:dalbt}:
\begin{align}\label{eq:gun}
    G= G(u) =
    \begin{pmatrix}
      z_1'& \xi^\mathsf T \\
       \xi & \sqrt{-4D_N''(0)} (\GOE_{N-1}-z_3'\mathbf{I}_{N-1})
    \end{pmatrix}=: \begin{pmatrix}
        z_1'& \xi^\mathsf T \\
         \xi & G_{**}
      \end{pmatrix}  ,
  \end{align}
  where with $z_1,z_2,z_3$ being independent standard Gaussian random variables,
  \begin{align*}
    z_1'&=\si_1 z_1 - \si_2  z_2 + m_1, \quad
z_3'=\frac1{\sqrt{-4D_N''(0)}}\Big(\si_2 z_2+   \sqrt{\al\beta}\rho z_3 - m_2\Big),
  \end{align*}
  $\xi$ is a centered column Gaussian vector with covariance matrix $-2D_N''(0)\mathbf{I}_{N-1}$, which is independent of $z_1,z_2,z_3$ and the GOE matrix $\GOE_{N-1}$. The conditional
distribution of $z_1^{\prime}$ given $z_3^{\prime}=y$ is
\begin{align}\label{eq:cond}
( z_1^{\prime} \mid z_3^{\prime}=y ) \sim N\left(\overline{\mathrm{a}}, \mathrm{b}^2\right),
\end{align}
where
\begin{align}\label{eq:ab2}
\overline{\mathrm{a}}&=  m_1-\frac{\sigma_2^2\left(\sqrt{-4 D_N^{\prime \prime}(0)} y+m_2\right)}{\sigma_2^2+\alpha \beta \rho^2}\nonumber \\
&=  \frac{-2 D_N^{\prime \prime}(0) \alpha \rho^2 u}{\left(-2 D_N^{\prime \prime}(0)-\beta^2\right) \sqrt{D_N\left(\rho^2\right)-\frac{D_N^{\prime}\left(\rho^2\right)^2 \rho^2}{D_N^{\prime}(0)}}} -\frac{\left(-2 D_N^{\prime \prime}(0)-\beta^2-\alpha \beta \rho^2\right) \sqrt{-4 D_N^{\prime \prime}(0)} y}{-2 D_N^{\prime \prime}(0)-\beta^2}, \nonumber \\
\mathrm{b}^2&=  \sigma_1^2+\sigma_2^2-\frac{\sigma_2^4}{\sigma_2^2+\alpha \beta \rho^2}=-4 D_N^{\prime \prime}(0)+\frac{2 D_N^{\prime \prime}(0) \alpha^2 \rho^4}{-2 D_N^{\prime \prime}(0)-\beta^2} .
\end{align}
Define the random variable
\begin{align}\label{aN}
\mathrm{a}_N
=\mathrm{a}_N(\rho, u, y)
=\overline{\mathrm{a}} - \sqrt{-D_N''(0)} \sum_{i=1}^{N-1} \frac{Z_i^2}{ \lambda_i-y},
\end{align}
where $Z_i, 1\leq i \leq N-1,$ are independent standard Gaussian random variables and $\lambda_1 \leq \cdots \leq \lambda_{N-1}$ are the ordered eigenvalues of the GOE matrix $\GOE_{N-1}$. By the above analysis, given Assumption \ref{assumption3}, we can express the expected number of critical points (with or without given indices) of non-isotropic Gaussian random fields with isotropic increments using the eigenvalue density of the GOE matrix. We omit the proof here since it follows from an easy adaption of the arguments in \cite[Section 4]{AZ20} and \cite[Section 3.1]{AZ22}.

\begin{theorem}\label{shell}
Let $H_N=\left\{H_N(x), x \in \mathbb{R}^N\right\}$ be a non-isotropic Gaussian field with isotropic increments. Let $E \subset \mathbb{R}$ be a Borel set and $T_N$ be the shell domain $T_{N}\left(R_{1}, R_{2}\right)=\{x \in \mathbb{R}^{N}: R_{1}<\|x\|<R_{2}\}$, where $0 \leq R_{1}<R_{2} < \infty$.
Assume Assumptions \ref{assumption1}, \ref{assumption2}, and \ref{assumption3}. Then we have
\begin{align*}
 &\mathrel{\phantom{=}} \mathbb{E} [\operatorname{Crt}_N\left(E, \left(R_{1}, R_{2}\right)\right)]\\
 &=\frac{2\left(-2D_N''(0)\right)^{N/2}}{ D_N^{\prime}(0)^{N / 2}\Gamma(N / 2)}\int_{R_{1}}^{R_{2}} \int_{E} \int_{\mathbb{R}} \frac{\exp \left\{ -\frac{\left(\sqrt{-4 D_N^{\prime \prime}(0)} y+m_2\right)^2}{2\left(-2 D_N^{\prime \prime}(0)-\beta^2\right)} - \frac{u^{2}}{2 \sigma_{Y}^{2}} \right\}}{2 \pi \sigma_{Y} \sqrt{-2 D_N^{\prime \prime}(0)-\beta^2}} \rho^{N-1}\\
& \quad \times \mathbb{E}_Z \left\{\mathbb{E}_{\mathrm{GOE}}\left[ \left( \mathrm{a}_N \left( \Phi\left(\frac{ \mathrm{a}_N}{\mathrm{~b}}\right) -\Phi\left(- \frac{ \mathrm{a}_N}{\mathrm{~b}}\right) \right) + \frac{\sqrt{2}\mathrm{b}}{\sqrt{\pi }} e^{-\frac{\mathrm{a}_N^2}{2 \mathrm{b}^2}}\right) \prod_{j=1}^{N-1}\left|\lambda_j-y\right| \right]\right\} \mathrm{d} y \, \dd  u \, \dd  \rho,
\intertext{for $k = 0, 1, \dots, N$,}
&\mathrel{\phantom{=}} \mathbb{E} [\operatorname{Crt}_{N,k}\left(E, \left(R_{1}, R_{2}\right)\right)]\\
&=\frac{2\left(-2D_N''(0)\right)^{N/2}}{ D_N^{\prime}(0)^{N / 2}\Gamma(N / 2)}\int_{R_{1}}^{R_{2}} \int_{E} \int_{\mathbb{R}} \frac{\exp \left\{ -\frac{\left(\sqrt{-4 D_N^{\prime \prime}(0)} y+m_2\right)^2}{2\left(-2 D_N^{\prime \prime}(0)-\beta^2\right)} - \frac{u^{2}}{2 \sigma_{Y}^{2}} \right\}}{2 \pi \sigma_{Y} \sqrt{-2 D_N^{\prime \prime}(0)-\beta^2}} \rho^{N-1} \\
&\quad \times \mathbb{E}_Z \left\{ \mathbb{E}_{\mathrm{GOE}} \left[ \left( \mathrm{a}_N \Phi\left(\frac{ \mathrm{a}_N}{\mathrm{b}}\right)+\frac{\mathrm{b}}{\sqrt{2 \pi }} e^{-\frac{ \mathrm{a}_N^2}{2\mathrm{b}^2}}\right) \prod_{j=1}^{N-1}\left|\lambda_j-y\right| \mathbf{1} \left\{\lambda_k< y<\lambda_{k+1}, \, 0 \leq k \leq N-1 \right\} \right. \right. \\
&\quad \left. \left. + \left(-\mathrm{a}_N \Phi\left(-\frac{ \mathrm{a}_N}{\mathrm{b}}\right)+\frac{\mathrm{b}}{\sqrt{2 \pi }} e^{-\frac{ \mathrm{a}_N^2}{2\mathrm{b}^2}}\right) \prod_{j=1}^{N-1}\left|\lambda_j-y\right| \mathbf{1} \left\{\lambda_{k-1}<y<\lambda_{k}, \, 1 \leq k \leq N \right\} \right]\right\} \mathrm{d} y \, \dd  u \, \dd  \rho,
\end{align*}
where $m_2$ and $ \si_Y$ are given in \eqref{eq:msialbt}, $\mathrm{b}$ is given in \eqref{eq:ab2}, $\mathrm{a}_N$ is defined with the GOE eigenvalues $\lambda_i$ and the independent standard Gaussian random variables $Z_i$, $1\leq i \leq N-1$ in \eqref{aN}, $\Phi$ is the c.d.f.\ of the standard Gaussian random variable and by convention $\lambda_0=-\infty$, $\lambda_N=\infty$.
\end{theorem}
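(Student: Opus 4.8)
The plan is to run the Kac--Rice computation \eqref{R1R2_1}--\eqref{R1R2_2} using the GOE-based representation \eqref{eq:gun} of the conditioned Hessian $G$, which is available precisely because Assumption \ref{assumption3} forces $d_1>0$ (whence $c>0$ and the required nondegeneracy by the preceding lemma, so that Kac--Rice applies). Thus the only quantities to compute are $\mathbb{E}\bigl[|\det G|\bigr]$ and $\mathbb{E}\bigl[|\det G|\,\mathbf{1}\{i(G)=k\}\bigr]$. Write $G$ in the block form \eqref{eq:gun} with $(1,1)$ entry $z_1'$, off-diagonal block $\xi$, and lower block $G_{**}=\sqrt{-4D_N''(0)}\,(\GOE_{N-1}-z_3'\mathbf{I}_{N-1})$; here $\xi$ has covariance $-2D_N''(0)\mathbf{I}_{N-1}$ and is independent of $\GOE_{N-1}$ and of $(z_1,z_2,z_3)$, while $z_1'$ and $z_3'$ depend only on $(z_1,z_2,z_3)$. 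Hence, conditionally on $z_3'=y$, the variable $z_1'$ remains independent of $\GOE_{N-1}$ and of $\xi$ with law $N(\overline{\mathrm{a}},\mathrm{b}^2)$ by \eqref{eq:cond}--\eqref{eq:ab2}, whereas $G_{**}$ becomes $\sqrt{-4D_N''(0)}\,(\GOE_{N-1}-y\mathbf{I}_{N-1})$.

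The algebraic core is the Schur complement identity $\det G=\bigl(z_1'-\xi^{\mathsf{T}}G_{**}^{-1}\xi\bigr)\det G_{**}$ together with $\det G_{**}=(-4D_N''(0))^{(N-1)/2}\prod_{j=1}^{N-1}(\lambda_j-y)$, where $\lambda_1\le\cdots\le\lambda_{N-1}$ are the ordered eigenvalues of $\GOE_{N-1}$. To handle the quadratic form, diagonalize $\GOE_{N-1}=O\,\mathrm{diag}(\lambda_1,\dots,\lambda_{N-1})\,O^{\mathsf{T}}$ with $O$ Haar distributed on the orthogonal group and independent of the ordered spectrum; by the rotational invariance of $\xi$ we have $O^{\mathsf{T}}\xi\stackrel{d}{=}\sqrt{-2D_N''(0)}\,Z$ for a standard Gaussian vector $Z=(Z_1,\dots,Z_{N-1})$ independent of $(\lambda_1,\dots,\lambda_{N-1})$, so that, conditionally on $z_3'=y$, $\lambda$ and $Z$,
\[
\xi^{\mathsf{T}}G_{**}^{-1}\xi\stackrel{d}{=}\frac{-2D_N''(0)}{\sqrt{-4D_N''(0)}}\sum_{i=1}^{N-1}\frac{Z_i^2}{\lambda_i-y}=\sqrt{-D_N''(0)}\sum_{i=1}^{N-1}\frac{Z_i^2}{\lambda_i-y}.
\]
Consequently the Schur complement $z_1'-\xi^{\mathsf{T}}G_{**}^{-1}\xi$ is, conditionally on $(y,\lambda,Z)$, Gaussian with mean $\mathrm{a}_N$ of \eqref{aN} and variance $\mathrm{b}^2$. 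Exactly as in the discussion preceding Theorem \ref{Shell}, Lemma \ref{sgn} yields $\operatorname{sgn}(G)=\operatorname{sgn}(G_{**})+\operatorname{sgn}(z_1'-\xi^{\mathsf{T}}G_{**}^{-1}\xi)$, i.e.\ $i(G)=i(G_{**})+\mathbf{1}\{z_1'-\xi^{\mathsf{T}}G_{**}^{-1}\xi<0\}$; since $i(G_{**})$ equals the number of $\lambda_j$ below $y$, the event $\{i(G)=k\}$ splits into $\{\lambda_k<y<\lambda_{k+1},\ z_1'-\xi^{\mathsf{T}}G_{**}^{-1}\xi>0\}$ for $0\le k\le N-1$ and $\{\lambda_{k-1}<y<\lambda_k,\ z_1'-\xi^{\mathsf{T}}G_{**}^{-1}\xi<0\}$ for $1\le k\le N$, with the conventions $\lambda_0=-\infty$, $\lambda_N=\infty$.

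It remains to integrate out the Gaussian Schur complement. For $W\sim N(\mathrm{a}_N,\mathrm{b}^2)$ one has $\mathbb{E}|W|=\mathrm{a}_N\bigl(\Phi(\mathrm{a}_N/\mathrm{b})-\Phi(-\mathrm{a}_N/\mathrm{b})\bigr)+\tfrac{\sqrt{2}\,\mathrm{b}}{\sqrt{\pi}}e^{-\mathrm{a}_N^2/(2\mathrm{b}^2)}$, $\mathbb{E}\bigl[W\mathbf{1}\{W>0\}\bigr]=\mathrm{a}_N\Phi(\mathrm{a}_N/\mathrm{b})+\tfrac{\mathrm{b}}{\sqrt{2\pi}}e^{-\mathrm{a}_N^2/(2\mathrm{b}^2)}$, and $\mathbb{E}\bigl[-W\mathbf{1}\{W<0\}\bigr]=-\mathrm{a}_N\Phi(-\mathrm{a}_N/\mathrm{b})+\tfrac{\mathrm{b}}{\sqrt{2\pi}}e^{-\mathrm{a}_N^2/(2\mathrm{b}^2)}$. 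Substituting these together with $\det G_{**}$ into $\mathbb{E}\bigl[|\det G|\,(\cdots)\bigr]$ and then into \eqref{R1R2_1}--\eqref{R1R2_2}, we use that $z_3'$ is Gaussian with mean $-m_2/\sqrt{-4D_N''(0)}$ and variance $(\sigma_2^2+\alpha\beta\rho^2)/(-4D_N''(0))=(-2D_N''(0)-\beta^2)/(-4D_N''(0))$, so that its density at $y$ supplies the factor $\exp\bigl\{-(\sqrt{-4D_N''(0)}\,y+m_2)^2/(2(-2D_N''(0)-\beta^2))\bigr\}$; combining this with the density $\tfrac{1}{\sqrt{2\pi}\,\sigma_Y}e^{-u^2/(2\sigma_Y^2)}$ of $Y$, with $p_{\nabla H_N(x)}(0)=(2\pi D_N'(0))^{-N/2}$, with $S_{N-1}\rho^{N-1}$, and with the constant $(-4D_N''(0))^{(N-1)/2}$ from $\det G_{**}$, and collecting all dimension-dependent constants into $\tfrac{2(-2D_N''(0))^{N/2}}{D_N'(0)^{N/2}\Gamma(N/2)}$, produces the two displayed formulas.

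The computation is largely bookkeeping; the one substantive point is the second paragraph, namely that after conditioning on $z_3'$ the bordering entry $z_1'$ decouples from the GOE block and that the rotational invariance of $\xi$ allows $\xi^{\mathsf{T}}G_{**}^{-1}\xi$ to be replaced by $\sqrt{-D_N''(0)}\sum_i Z_i^2/(\lambda_i-y)$ with $Z$ independent of the ordered GOE spectrum. This is exactly the step that fails without Assumption \ref{assumption3}: only when $d_1>0$ can the $(1,1)$ entry of the SGOI matrix be peeled off with the remaining block an honest GOE matrix plus an \emph{independent} scalar, which is what makes \eqref{eq:cond} and \eqref{eq:gun} legitimate. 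Alternatively one can read off the same identities from Theorem \ref{Shell} by using $c>0$ to write the $\operatorname{GOI}(c)$ matrix $M'$ there as $\GOE_{N-1}+\sqrt{c}\,g\,\mathbf{I}_{N-1}$ with $g$ an independent standard Gaussian and then performing the Gaussian integral over $g$ jointly with the Gaussian weight in the $y$-variable of Theorem \ref{Shell}; this change of variables converts the $\operatorname{GOI}(c)$-expectation there into the GOE-expectation with the $\Phi$-terms appearing here.
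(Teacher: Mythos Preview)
Your proof is correct and follows precisely the route the paper indicates: the paper itself omits the argument, stating only that it ``follows from an easy adaption of the arguments in \cite[Section 4]{AZ20} and \cite[Section 3.1]{AZ22},'' and your write-up is exactly that adaptation---condition on $z_3'=y$ in the block form \eqref{eq:gun}, apply the Schur complement and Lemma~\ref{sgn}, use rotational invariance of $\xi$ to diagonalize against the GOE spectrum, and integrate the resulting $N(\mathrm{a}_N,\mathrm{b}^2)$ Schur complement. Your bookkeeping of the constants (density of $z_3'$, $p_{\nabla H_N(x)}(0)$, $S_{N-1}$, and the factor $(-4D_N''(0))^{(N-1)/2}$) is also correct, and the alternative you mention at the end---unraveling the $\operatorname{GOI}(c)$ matrix in Theorem~\ref{Shell} as $\GOE_{N-1}+\sqrt{c}\,g\,\mathbf{I}_{N-1}$---is likewise noted in the paper just before the theorem statement.
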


\section{Representation for fields with structure functions in $\mathcal{D}_\infty$} \label{sec3}

In this section, we show that if a structure function has the representation \eqref{Dr}, then Assumption \ref{assumption3} always holds.
\begin{lemma}\label{le:assiii12}
When $D (r)$  has the form \eqref{Dr}, if the inequality \eqref{ar} holds, then both inequalities
\eqref{larn} and \eqref{be} hold automatically.
\end{lemma}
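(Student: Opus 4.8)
The plan is to reduce both implications to the single fact that $D''$ is nondecreasing on $\rz_+$, which is built into the representation \eqref{Dr}. Fix $r>0$. Under Assumption \ref{assumption1} one may differentiate \eqref{Dr} under the integral sign to obtain $D'(r)=A+\int_{(0,\infty)}te^{-rt}\,\nu(\dd t)$, $D''(r)=-\int_{(0,\infty)}t^2e^{-rt}\,\nu(\dd t)$ and $D'''(r)=\int_{(0,\infty)}t^3e^{-rt}\,\nu(\dd t)$, all integrals being finite because $\int\frac{t}{1+t}\,\nu(\dd t)<\infty$ near $0$ and $\int t^4\,\nu(\dd t)=-D^{(4)}(0)<\infty$ near $\infty$. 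Since $\nu\not\equiv0$ (otherwise $D^{(4)}(0)=0$, contradicting Assumption \ref{assumption1}), we get $D''(r)<0$, $D'''(r)\geq0$, and $D'(r)-D'(0)=\int_{(0,\infty)}t(e^{-rt}-1)\,\nu(\dd t)<0$. I would also note that the hypothesis \eqref{ar} is meaningful here, i.e.\ $\sigma_Y^2=D(r)-\frac{D'(r)^2r}{D'(0)}>0$: as $D'$ is positive and strictly decreasing, $D(r)=\int_0^r D'(u)\,\dd u>rD'(r)>\frac{rD'(r)^2}{D'(0)}$.

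Next I would record the key inequality. Because $D'''\geq0$, the function $D''$ is nondecreasing, hence $D'(r)-D'(0)=\int_0^r D''(u)\,\dd u\leq rD''(r)$. Writing $P=\sigma_Y^2>0$, $q=D'(r)-D'(0)<0$ and $s=2rD''(r)<0$ (so $s+q<0$ as well), this says $s-2q\geq0$. A short computation using $\alpha r+\beta=\frac{s+q}{\sqrt P}$ and $\beta=\frac{q}{\sqrt P}$, followed by multiplication by $P>0$, shows that \eqref{ar} is equivalent to $-4D''(0)P>2q(s+q)=2sq+2q^2$, that \eqref{be} is equivalent to $-4D''(0)P>s(s+q)=s^2+sq$, and — using the equivalence of \eqref{larn} with \eqref{eq:dalbt}, i.e.\ $-4D''(0)>\alpha^2r^2+2\beta^2$, observed above — that \eqref{larn} is equivalent to $-4D''(0)P>s^2+2q^2$.

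Finally, assume \eqref{ar}, that is $-4D''(0)P>2sq+2q^2$. Then the two conclusions follow from the factorizations $(2sq+2q^2)-(s^2+2q^2)=-s(s-2q)\geq0$, valid since $s<0$ and $s-2q\geq0$, which upgrades \eqref{ar} to $-4D''(0)P>s^2+2q^2$, i.e.\ \eqref{larn}; and $(2sq+2q^2)-(s^2+sq)=-(s+q)(s-2q)\geq0$, valid since $s+q<0$ and $s-2q\geq0$, which upgrades \eqref{ar} to $-4D''(0)P>s^2+sq$, i.e.\ \eqref{be}. The only genuine obstacle is to isolate the quantity $s-2q=2\bigl(rD''(r)-D'(r)+D'(0)\bigr)$ and to recognize that its nonnegativity is exactly the monotonicity of $D''$ coming from $D'''\geq0$; after that, the argument is just the translation of \eqref{ar}, \eqref{be}, \eqref{larn} into the variables $P,q,s$ together with the two elementary sign checks above.
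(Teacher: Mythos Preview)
Your proof is correct and follows essentially the same route as the paper: both arguments isolate the single inequality $rD''(r)\geq D'(r)-D'(0)$ (the paper phrases it as strict convexity of $D'$ and writes it as $\alpha r>2\beta$, you phrase it as monotonicity of $D''$ and write it as $s-2q\geq0$), and then both reduce \eqref{be} and \eqref{larn} to \eqref{ar} via the same two factorizations, yours in the variables $(s,q)=(\alpha r\sqrt P,\beta\sqrt P)$ and the paper's directly in $(\alpha r,\beta)$. The only cosmetic difference is that you spell out the preliminary facts ($\sigma_Y^2>0$, the signs of $q,s,s+q$, and the equivalence of \eqref{larn} with $-4D''(0)>\alpha^2r^2+2\beta^2$) more explicitly than the paper does.
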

\begin{proof}
Since $D (r)$ has the form \eqref{Dr}, the function $D^{\prime}(r)$ is positive and strictly convex on $[0,\infty)$. Together with the mean value theorem, for any $r>0$, $D^{\prime\prime}(r)r>D^{\prime}(r)-D^{\prime}(0)$, which further implies
\begin{align}\label{ab}
\alpha r>2 \beta,\quad  \quad\alpha\beta r<2 \beta^2.
\end{align}
If the inequality \eqref{ar} holds, by \eqref{ab}, we deduce that $\left(\alpha r+\beta\right) \alpha r<2\left(\alpha r+\beta\right)\beta<-4 D^{\prime \prime}(0)$, which yields the inequality \eqref{be}. On the other hand, for any $r>0$, note that
\begin{align*}
\frac{2D^{\prime\prime}(r)^2
r^{2}+\left(D^{\prime}(r)-D^{\prime}(0)\right)^2}{D\left(r\right)-\frac{D^{\prime}(r)^2 r}{D^{\prime}(0)}}=\frac{\alpha^2 r^2}{2}+\beta^2.
\end{align*}
Thus, to prove \eqref{larn}, we only need to show
\begin{align*}
-2D^{\prime\prime}(0)>\frac{\alpha^2 r^2}{2}+\beta^2,
\end{align*}
which follows from \eqref{ab} by observing that $\frac{\alpha^2 r^2}{2}+\beta^2<\left(\alpha r+\beta\right) \beta<-2D^{\prime\prime}(0)$. 
\end{proof}
Recall that
a function $f \colon (0, \infty) \to \mathbb{R}$ is a Bernstein function if $f$ is of class $C^{\infty}, f(\lambda) \geq 0$ for all $\lambda>0$ and
$(-1)^{n-1} f^{(n)}(\lambda) \geq 0$ for all $n \in \mathbb{N}$ and $\lambda>0$. For the Bernstein functions, we have the following theorem.
\begin{theorem}[{\cite[Theorem 3.2]{SSV12}}]
A function $f \colon (0, \infty) \to \mathbb{R}$ is a Bernstein function if and only if, it admits the representation
$$
f(\lambda)=a+b \lambda+\int_{(0, \infty)}\left(1-e^{-\lambda t}\right) \mu(\dd t),
$$
where $a, b \geq 0$ and $\mu$ is a measure on $(0, \infty)$ satisfying $\int_{(0, \infty)}(1 \wedge t) \mu(\dd t)<\infty$. In particular, the triplet $(a, b, \mu)$ determines $f$ uniquely and vice versa.\label{Bern}
\end{theorem}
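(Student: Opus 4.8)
The plan is to deduce this from the classical Hausdorff--Bernstein--Widder theorem on completely monotone functions (the Laplace transform representation), which I take as known. The ``if'' direction is routine. Given $f(\lambda)=a+b\lambda+\int_{(0,\infty)}(1-e^{-\lambda t})\,\mu(\mathrm{d}t)$ with $a,b\ge 0$ and $\int_{(0,\infty)}(1\wedge t)\,\mu(\mathrm{d}t)<\infty$, one first checks that each integral $\int_{(0,\infty)}t^{n}e^{-\lambda t}\,\mu(\mathrm{d}t)$ is finite for every $\lambda>0$ and $n\ge 0$: on $(0,1]$ one bounds $t^{n}e^{-\lambda t}\le t=(1\wedge t)$ for $n\ge 1$, and on $(1,\infty)$ the integrand is bounded while $\mu((1,\infty))<\infty$. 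Dominated convergence then permits differentiation under the integral sign to all orders, giving $f'(\lambda)=b+\int_{(0,\infty)}t e^{-\lambda t}\,\mu(\mathrm{d}t)$ and $f^{(n)}(\lambda)=(-1)^{n+1}\int_{(0,\infty)}t^{n}e^{-\lambda t}\,\mu(\mathrm{d}t)$ for $n\ge 2$, so that $(-1)^{n-1}f^{(n)}(\lambda)\ge 0$ for all $n\ge 1$; and $f(\lambda)\ge 0$ is immediate since every term is nonnegative. Hence $f$ is a Bernstein function.

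For the converse, suppose $f$ is a Bernstein function. First I would note that $g:=f'$ is completely monotone: $(-1)^{n}g^{(n)}=(-1)^{n}f^{(n+1)}=(-1)^{(n+1)-1}f^{(n+1)}\ge 0$ for all $n\ge 0$. By the Hausdorff--Bernstein--Widder theorem there is a unique measure $\rho$ on $[0,\infty)$ with $f'(\lambda)=\int_{[0,\infty)}e^{-\lambda t}\,\rho(\mathrm{d}t)<\infty$ for every $\lambda>0$. Setting $b:=\rho(\{0\})\ge 0$ and $\nu:=\rho|_{(0,\infty)}$, we have $f'(\lambda)=b+\int_{(0,\infty)}e^{-\lambda t}\,\nu(\mathrm{d}t)$. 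Integrating from $\lambda_{0}$ to $\lambda$ and using Tonelli gives, for $0<\lambda_{0}<\lambda$,
\[
f(\lambda)=f(\lambda_{0})+b(\lambda-\lambda_{0})+\int_{(0,\infty)}\big(e^{-\lambda_{0}t}-e^{-\lambda t}\big)\,\mu(\mathrm{d}t),\qquad \mu(\mathrm{d}t):=t^{-1}\nu(\mathrm{d}t).
\]

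The decisive step, and the only place the hypothesis $f\ge 0$ is genuinely used, is to let $\lambda_{0}\downarrow 0$. Since $f'\ge 0$, $f$ is nondecreasing, so $f(\lambda_{0})$ decreases as $\lambda_{0}\downarrow 0$ to a limit $a:=\lim_{\lambda_{0}\downarrow 0}f(\lambda_{0})\in[0,\infty)$; meanwhile $e^{-\lambda_{0}t}-e^{-\lambda t}\uparrow 1-e^{-\lambda t}$, so by monotone convergence the integral converges to $\int_{(0,\infty)}(1-e^{-\lambda t})\,\mu(\mathrm{d}t)$. This yields $f(\lambda)=a+b\lambda+\int_{(0,\infty)}(1-e^{-\lambda t})\,\mu(\mathrm{d}t)$ with $a,b\ge 0$, and in particular $\int_{(0,\infty)}(1-e^{-\lambda t})\,\mu(\mathrm{d}t)=f(\lambda)-a-b\lambda<\infty$. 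The required condition $\int_{(0,\infty)}(1\wedge t)\,\mu(\mathrm{d}t)<\infty$ then follows because $1-e^{-\lambda t}\ge c_{\lambda}(1\wedge t)$ for a constant $c_{\lambda}>0$ (the ratio $(1-e^{-\lambda t})/(1\wedge t)$ is continuous and strictly positive on $(0,\infty)$ with finite positive limits $\lambda$ and $1$ at the two ends, hence attains a positive minimum). Uniqueness of $(a,b,\mu)$ is then read off from $f$: $b=\lim_{\lambda\to\infty}f(\lambda)/\lambda$ (dominated convergence, with $(1-e^{-\lambda t})/\lambda\le 1\wedge t$ for $\lambda\ge 1$), $a=\lim_{\lambda\downarrow 0}f(\lambda)$, and finally $t\,\mu(\mathrm{d}t)=\nu(\mathrm{d}t)$ is determined by $f'(\lambda)-b=\int_{(0,\infty)}t e^{-\lambda t}\,\mu(\mathrm{d}t)$ via injectivity of the Laplace transform on measures. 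I expect the only real friction to be this bookkeeping around the constant $a$ and the integrability of $\mu$ — checking that $a$ is finite and nonnegative and that the measure inherits $\int(1\wedge t)\,\mu(\mathrm{d}t)<\infty$ — since all the hard analysis is delegated to the completely monotone representation.
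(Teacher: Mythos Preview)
The paper does not prove this statement at all; it is quoted verbatim as \cite[Theorem~3.2]{SSV12} and used as a black box. Your argument is correct and is in fact essentially the proof given in the cited reference: one applies the Hausdorff--Bernstein--Widder theorem to the completely monotone function $f'$, integrates the resulting Laplace representation, and recovers $a,b,\mu$ by the limiting procedures you describe. So there is nothing to compare here beyond noting that you have supplied a self-contained proof where the paper simply cites one.
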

In the paper \cite{AZ20}, the authors conjectured that all Bernstein functions with the form of $D(r)$ defined in \eqref{Dr} satisfy Assumption \ref{assumption3}. The following result shows that it is indeed true. Thanks to \pref{le:assiii12}, it remains to prove the inequality \eqref{ar}.
\begin{theorem} \label{thm3.3}
Assume Assumptions \ref{assumption1} and \ref{assumption2}. Let $D (r)$ denote the Bernstein function defined in \eqref{Dr}. Then we have for all $r>0$,
\begin{equation*}
    -2 D'' (0) > (\alpha r + \beta) \beta,
\end{equation*}
where $\alpha=\alpha\left(r \right)$ and $\beta=\beta\left(r \right)$ are defined in \eqref{eq:msialbt} with $\rho^2$ replaced by $r$.
\end{theorem}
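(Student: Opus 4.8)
The plan is to reduce the asserted inequality to an elementary estimate on the Lévy measure $\nu$ of \eqref{Dr}. Under Assumptions \ref{assumption1} and \ref{assumption2} one has $D(r)=\int_{(0,\infty)}(1-e^{-rt})\,\dd\nu(t)+Ar$ with $\int_{(0,\infty)}t^{4}\,\dd\nu(t)=D^{(4)}(0)<\infty$, so all moments $\mu_{k}=\int t^{k}\,\dd\nu(t)$ with $k\le 4$ are finite, and $\mu_{4}>0$, hence $\nu\not\equiv 0$. Consequently $D''(r)=-\int t^{2}e^{-rt}\,\dd\nu(t)<0$ for all $r\ge 0$, so $D'$ is strictly decreasing and
\[
\sigma_Y^{2}=D(r)-\frac{D'(r)^{2}r}{D'(0)}=\int_{0}^{r}D'(s)\,\dd s-\frac{D'(r)^{2}r}{D'(0)}>rD'(r)-\frac{D'(r)^{2}r}{D'(0)}\ge 0,\qquad r>0;
\]
thus $\alpha(r),\beta(r)$ are well defined and, multiplying by $\sigma_Y^{2}>0$, the claim is equivalent to
\[
-2D''(0)\,\sigma_Y^{2}>\bigl(2D''(r)r+D'(r)-D'(0)\bigr)\bigl(D'(r)-D'(0)\bigr),\qquad r>0.
\]
A short computation gives $\partial_{A}\sigma_Y^{2}=r\bigl(D'(0)-D'(r)\bigr)^{2}/D'(0)^{2}\ge 0$, while $D''(0),D''(r)$ and $D'(r)-D'(0)$ are independent of the drift $A$; hence the difference of the two sides is nondecreasing in $A$, and it suffices to treat $A=0$.

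Assume $A=0$, write $\mu_{1}=D'(0)$ and $\Delta=\Delta(r)=D'(0)-D'(r)$, multiply the displayed inequality by $\mu_{1}>0$, and insert the integral representations of $D(r)$, $D'(r)$, $D''(r)$, $D''(0)$. Every resulting term is a product of three linear functionals of $\nu$, so the inequality becomes a homogeneous cubic form in $\nu$:
\[
\iiint_{(0,\infty)^{3}}g_{r}(x,y,z)\,\dd\nu(x)\,\dd\nu(y)\,\dd\nu(z)>0,
\]
where $g_{r}$ is an explicit sum of products of the one–variable functions $t$, $te^{-rt}$, $t^{2}$, $t^{2}e^{-rt}$ and $1-e^{-rt}$ placed in the three slots; a direct check shows $g_{r}\equiv 0$ when $r=0$. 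Since the triple integral is unchanged under permuting $x,y,z$, it equals $\iiint\bar g_{r}\,\dd\nu(x)\dd\nu(y)\dd\nu(z)$, where $\bar g_{r}$ is the average of $g_{r}$ over the six permutations of its arguments, so it remains to prove $\bar g_{r}\ge 0$ on $(0,\infty)^{3}$ with strict inequality somewhere on $\supp\nu^{\otimes 3}$.

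Establishing $\bar g_{r}\ge 0$ is the heart of the argument and is a delicate pointwise estimate, because $g_{r}$ itself is \emph{not} bounded below (it is linear in any single variable sent to $+\infty$); one must use the cancellation among the six permuted copies, the combined leading behaviour being governed by $1-e^{-s}\ge se^{-s}$ (equivalently $(1+s)e^{-s}\le 1$). The mechanism is already visible on the diagonal $\nu=\delta_{a}$, where the triple integral collapses to $g_{r}(a,a,a)=a^{3}\bigl(1-e^{-2ar}-2are^{-ar}\bigr)=a^{3}e^{-2ar}\psi(ar)$, with $\psi(s):=e^{2s}-2se^{s}-1$ satisfying $\psi(0)=0$ and $\psi'(s)=2e^{s}(e^{s}-1-s)>0$, hence positive for $ar>0$. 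I would prove the general inequality along the same lines — split the symmetrized kernel into manifestly nonnegative groupings of permuted terms and estimate each using $1-e^{-s}\le s$, $(1+s)e^{-s}\le 1$ and $1-(1+s)e^{-s}\ge\tfrac12 s^{2}e^{-s}$ for $s\ge0$ — and then strict positivity on the diagonal, together with continuity of $\bar g_{r}$ and $\nu\not\equiv 0$, completes the proof. The main obstacle is the bookkeeping of the symmetrization and the identification of the correct nonnegative groupings; the reductions above and the remaining elementary estimates are routine.
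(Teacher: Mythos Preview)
Your preliminary reductions are correct: $\sigma_Y^{2}>0$, the equivalent rewriting after multiplying by $\sigma_Y^{2}$, the monotonicity in $A$ (hence the reduction to $A=0$), and the observation that after multiplying by $D'(0)$ the inequality becomes a homogeneous cubic form in $\nu$ are all fine. The diagonal check $\nu=\delta_{a}$ is also correct. But the proof stops exactly where the work begins: you reduce everything to the claim that the symmetrized three-variable kernel $\bar g_{r}$ is pointwise nonnegative, and then do not prove it. You yourself call this ``the main obstacle'' and simultaneously declare the remaining estimates ``routine''; they are not. A priori there is no reason a symmetrized cubic kernel built from oscillatory pieces like $te^{-rt}$ and $1-e^{-rt}$ should be pointwise nonnegative on all of $(0,\infty)^3$, and the diagonal case gives no information about the off-diagonal sign. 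Without an explicit grouping that exhibits $\bar g_{r}\ge 0$, the argument is a strategy, not a proof.

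The paper avoids this three-variable analysis entirely by a decomposition you did not find. After the same rewriting, it splits the target inequality into two pieces: one in which $D''(r)$ is replaced by $D''(0)$ on the right, and a correction term. The first piece is handled by the Cauchy--Schwarz inequality $\bigl(\int t(e^{-rt}-1)\,\dd\nu\bigr)^{2}\le \int t^{2}\,\dd\nu\cdot\int(e^{-rt}-1)^{2}\,\dd\nu$, after which what remains is a \emph{single} integral $\int(1-2rte^{-rt}-e^{-2rt})\,\dd\nu$, positive because $1-2xe^{-x}-e^{-2x}>0$ for $x>0$ (this is your $\psi$). The second piece reduces, after dividing by $D'(r)-D'(0)<0$, to $D''(0)(D'(r)-D'(0))\le D'(0)(D''(r)-D''(0))$, which is a \emph{two}-variable symmetrization: the difference equals $\tfrac12\iint st(s-t)(e^{-rs}-e^{-rt})\,\dd\nu(s)\dd\nu(t)-A\int s^{2}(1-e^{-rs})\,\dd\nu(s)\le 0$, with an obviously nonpositive integrand. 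Thus the paper never has to control a three-variable kernel; the Cauchy--Schwarz step absorbs one layer of integration and turns the hard part into a one-variable pointwise estimate. If you want to salvage your approach, this decomposition is precisely the ``correct nonnegative grouping'' you were looking for.
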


\begin{proof}
    It suffices to show
    \begin{align}
        -2 D''(0) \left( D(r) + r D'(0) - 2r D'(r) \right) &> 2r D''(0) \left( D'(r) - D'(0) \right) + \left( D'(r) - D'(0) \right)^2, \label{eq:thm2pr1} \\
        2r D''(0) \frac{\left( D'(r) - D'(0) \right)^2}{D'(0)} &\geq 2r \left( D''(r) - D''(0) \right) \left( D'(r) - D'(0) \right), \label{eq:thm2pr2}
    \end{align}
    for any $r > 0$. Indeed, adding the above two inequalities together implies that for all $r > 0$,
    \begin{equation*}
        -2 D'' (0) \left( D(r) - \frac{D' (r)^2}{D' (0)} r \right)
        > 2r D'' (r) \left( D'(r) - D'(0) \right) + \left( D'(r) - D'(0) \right)^2, 
    \end{equation*}
    which is equivalent to the desired conclusion.

    To prove \eqref{eq:thm2pr1}, notice that $D'' (0) < 0$ and
    \begin{gather*}
        \left( D'(r) - D'(0) \right)^2
        = \left[ \int_{(0, \infty)} t \left( e^{-rt} - 1 \right) \nu(\mathrm{d} t) \right]^2
        \leq -D''(0) \int_{(0, \infty)} \left( e^{-rt} - 1 \right)^2 \nu(\mathrm{d} t).
    \end{gather*}
    Therefore, it suffices to show that for $r>0$,
    \begin{equation*}
        2 \left( D(r) + r D'(0) - 2r D'(r) \right) > -2r \left( D'(r) - D'(0) \right) + \int_{(0, \infty)} \left( e^{-rt} - 1 \right)^2 \nu(\mathrm{d} t), 
    \end{equation*}
    or equivalently, $2D(r) - 2r D'(r) - \int_{(0, \infty)} ( 1 - e^{-rt} )^2 \nu(\mathrm{d} t) > 0$. But from the definition of $D(r)$, we have for $r>0$,
    \begin{equation*}
        2D(r) - 2r D'(r) - \int_{(0, \infty)} \left( 1 - e^{-rt} \right)^2 \nu(\mathrm{d} t)
        = \int_{(0, \infty)} \left( 1 - 2rt e^{-rt} - e^{-2rt} \right) \nu(\mathrm{d} t)
        > 0, 
    \end{equation*}
    where in the last inequality we used the fact that $1 - 2x e^{-x} - e^{-2x} > 0$ for any $x > 0$. 

    To prove \eqref{eq:thm2pr2}, it suffices to show that for $r>0$,
    \begin{equation*}
        D''(0) \left( D'(r) - D'(0) \right) \leq D'(0) \left( D''(r) - D''(0) \right), 
    \end{equation*}
    as $D'(0) > 0$ and $D'(r) - D'(0) < 0$. By definition,
    \begin{align*}
        &\mathrel{\phantom{=}} D''(0) \left( D'(r) - D'(0) \right) - D'(0) \left( D''(r) - D''(0) \right) \\
        &= -\int_{(0, \infty)} s^2 \, \nu (\mathrm{d} s) \int_{(0, \infty)} t \left( e^{-rt} - 1 \right) \nu (\mathrm{d} t) - \left( \int_{(0, \infty)} t \, \nu (\mathrm{d} t) + A \right) \int_{(0, \infty)} s^2 \left( 1 - e^{-rs} \right) \nu (\mathrm{d} s) \\
        &= \int_{(0, \infty)^2} s^2 t \left( e^{-rs} - e^{-rt} \right) \nu (\mathrm{d} s) \nu (\mathrm{d} t) - A \int_{(0, \infty)} s^2 \left( 1 - e^{-rs} \right) \nu (\mathrm{d} s) \\
        &= \frac{1}{2} \int_{(0, \infty)^2} s t (s - t) \left( e^{-rs} - e^{-rt} \right) \nu (\mathrm{d} s) \nu (\mathrm{d} t) - A \int_{(0, \infty)} s^2 \left( 1 - e^{-rs} \right) \nu (\mathrm{d} s).
    \end{align*}
    Since $A \geq 0$, $s^2 \left( 1 - e^{-rs} \right) > 0$, and $s t (s - t) \left( e^{-rs} - e^{-rt} \right) \leq 0$ for any $s, t, r > 0$, we find for $r>0$,
    \begin{equation*}
        D''(0) \left( D'(r) - D'(0) \right) - D'(0) \left( D''(r) - D''(0) \right)
        \leq 0,
    \end{equation*}
    which gives \eqref{eq:thm2pr2} and completes the proof.
\end{proof}

On the other hand, Assumption \ref{assumption3} is not expected to hold for all structure functions in the class $\dx_N$ with fixed $N\in\nz$, since the Bessel functions in the representation \eqref{DNr} are oscillatory. The following example gives a family of functions in $\dx_1 \setminus \dx_\8$ satisfying Assumption \ref{assumption3}.

\begin{example} \label{ex2} \rm
For any $\varepsilon \in (0, \frac{1}{8}]$, the function
\begin{align*}
    f (r) = (r + 1)^{\frac{11}{12}} - 1 + \varepsilon \int_0^r \frac{1 - \cos \sqrt{t}}{t} \, \mathrm{d} t
\end{align*}
is in $\mathcal{D}_1 \setminus \mathcal{D}_\infty$ and satisfies Assumption \ref{assumption3}. For simplicity, we denote
\begin{align*}
    f_1 (r) = (r + 1)^{\frac{11}{12}} - 1, \qquad \qquad
    f_2 (r) = \int_0^r \frac{1 - \cos \sqrt{t}}{t} \, \mathrm{d} t.
\end{align*}
Then $f (r) = f_1 (r) + \varepsilon f_2 (r)$ for $\varepsilon \in (0, \frac{1}{8}]$. Let us verify that $f$ satisfies the desired properties. 

(1) We first show that $f \in \mathcal{D}_1 \setminus \mathcal{D}_\infty$. Indeed, it is well known that $f_1$ is a Bernstein function \cite{SSV12} with $f_1 (0) = 0$, which yields $f_1 \in \mathcal{D}_\infty \subset \mathcal{D}_1$. For $N=1$, we have $\Lambda_1(x) = \cos x$. If we set $\nu_1 (\mathrm{d} t) = \frac{1}{t} \mathbf{1}_{(0,1)} (t) \, \mathrm{d} t$, then
\begin{align*}
    f_2 (r)
    = \int_0^r \frac{1 - \cos \sqrt{t}}{t} \, \mathrm{d} t
    = \int_0^1 \frac{1 - \cos \sqrt{rt}}{t} \, \mathrm{d} t
    = \int_{(0, \infty)} \left( 1 - \Lambda_1 (\sqrt{rt}) \right) \nu_1 (\mathrm{d} t).
\end{align*}
It follows that $f_2 \in \mathcal{D}_1$ and thus $f = f_1 + \varepsilon f_2 \in \mathcal{D}_1$. To check $f \notin \mathcal{D}_\infty$, we compute
\begin{align*}
    f_1' (r) &= \frac{11}{12} (r+1)^{-\frac{1}{12}}, &
    f_1'' (r) &= -\frac{11}{144} (r+1)^{-\frac{13}{12}}, \\
    f_2' (r) &= \begin{cases}
        \frac{1 - \cos \sqrt{r}}{r}, &r > 0, \\
        \frac{1}{2}, &r = 0,
    \end{cases}  &
    f_2'' (r) &= \begin{cases}
        \frac{\sqrt{r} \sin \sqrt{r} + 2 \cos \sqrt{r} - 2}{2 r^2}, &r > 0, \\
        -\frac{1}{24}, &r = 0,
    \end{cases}
\end{align*}
and
\begin{align*}
    f_1''' (r) = o \left( \frac{1}{r^2} \right), \qquad
    f_2''' (r) = \frac{\cos \sqrt{r}}{4 r^2} + o \left( \frac{1}{r^2} \right), \qquad \text{as } r \to \infty.
\end{align*}
Therefore, $f''' (r) = f_1''' (r) + \varepsilon f_2''' (r) = (\varepsilon \cos \sqrt{r} + o (1)) / (4 r^2)$, as $r \to \infty$. From here we can find an $r_0 > 0$ such that $f''' (r_0) < 0$, which indicates $f \notin \mathcal{D}_\infty$.

(2) We check $\alpha, \beta < 0$. This would follow from $f'' (r) < 0$ for all $r>0$. In fact, from calculus we know $x \sin x + 2 \cos x - 2 < 0$ for $x \in (0, \pi)$, which implies $f_2'' (r) < 0$ for all $r \in (0, 9)$. Together with $f_1'' (r) < 0$, we have $f'' (r) < 0$ for $r \in (0, 9)$. And for $r \in [9, + \infty)$, since $f_2'' (r) \leq (\sqrt{r} \sin \sqrt{r}) / (2 r^2) \leq 1 / (2 r^{3 / 2})$, we obtain
\begin{align*}
    f'' (r)
    = f_1'' (r) + \varepsilon f_2'' (r)
    \leq - \frac{11}{144} (r+1)^{-\frac{13}{12}} + \frac{\varepsilon}{2} r^{-\frac{3}{2}}
    \leq \left[ -\frac{11}{144} \left( \frac{r}{r+1} \right)^{\frac{3}{2}} + \frac{1}{16} \right] r^{-\frac{3}{2}}
    < 0.
\end{align*}

(3) With the same decomposition as in the proof of Theorem \ref{thm3.3}, \eqref{ar} follows from
\begin{align}
    -2 f''(0) \left( f(r) + r f'(0) - 2r f'(r) \right) &> 2r f''(0) \left( f'(r) - f'(0) \right) + \left( f'(r) - f'(0) \right)^2, \label{eq1} \\
    2r f''(0) \frac{\left( f'(r) - f'(0) \right)^2}{f'(0)} &\geq 2r \left( f''(r) - f''(0) \right) \left( f'(r) - f'(0) \right). \label{eq2}
\end{align}
Notice that \eqref{eq1} is equivalent to
$g (r) := ( f' (r) - f' (0) )^2 + 2 f'' (0) ( f(r) - r f' (r)) < 0$. One can check that $x \sin x + 2 \cos x - 2 > - \frac{x^4}{12}$ for $x > 0$, which further implies $f_2'' (r) > -\frac{1}{24} = f_2'' (0)$ for $r > 0$. Since $f_1'' (r) > f_1'' (0)$, we have $f'' (r) > f'' (0)$ for $r > 0$. Therefore,
\begin{align*}
    g' (r)
    = 2 f'' (r) \left( f' (r) - f' (0) - r f'' (0) \right)
    = 2 f'' (r) \int_0^r (f'' (s) - f'' (0) ) \, \mathrm{d} s
    < 0 \ \  \text{for} \ \ r > 0,
\end{align*}
which implies $g(r) < 0$ and \eqref{eq1} for $r > 0$.

Since $f' (r) - f' (0) < 0$, \eqref{eq2} is equivalent to $f'' (0) ( f' (r) - f' (0) ) \leq f' (0) ( f'' (r) - f'' (0) )$. Since $f' (0) = -12 f'' (0)$, it suffices to prove $f' (r) + 12 f'' (r) \geq 0$. By Taylor expansion we know $1 - \cos x > \frac{x^2}{2} - \frac{x^4}{24}$ for $x > 0$. It follows that $f_2' (r) > \frac{1}{2} - \frac{r}{24}$ for $r > 0$. Together with $f_2'' (r) > -\frac{1}{24}$, we have
\begin{align*}
    f_2' (r) + 12 f_2'' (r) > - \frac{1}{24} r 
\end{align*}
for $ r > 0.$ If $r \in (0, 9)$, we have
\begin{align*}
    f_1' (r) + 12 f_1'' (r)
    = \frac{11}{12} (r+1)^{-\frac{13}{12}} r
    > \frac{11}{1200} r,
\end{align*}
which implies that for $r\in (0,9)$,
\begin{align} \label{eq3}
    f' (r) + 12 f'' (r)
    > \left( \frac{11}{1200} - \frac{\varepsilon}{24} \right) r
    > 0.
\end{align}
For $r \in [9, +\infty)$, since $f_2'(r)\ge0$, we find
\begin{gather*}
    f_1' (r) + 12 f_1'' (r)
    = \frac{11}{12} (r+1)^{-\frac{13}{12}} r
    \geq \frac{11}{12}  \left( \frac{9}{10} \right)^{\frac{13}{12}} r^{-\frac{1}{12}}, \\
    f_2' (r) + 12 f_2'' (r)
    \geq -12 \times \frac{\left\lvert \sqrt{r} \sin \sqrt{r} + 2 \cos \sqrt{r} - 2 \right\rvert}{2 r^2}
    \geq -30 r^{-\frac{3}{2}},
\end{gather*}
which implies that
\begin{align} \label{eq4}
    f' (r) + 12 f'' (r)
    \geq \left[ \frac{11}{12}  \left( \frac{9}{10} \right)^{\frac{13}{12}} - 30 \varepsilon r^{-1} \right] r^{-\frac{1}{12}}
    > 0
\end{align}
for $r \in [9, +\infty)$. Now \eqref{eq2} follows from \eqref{eq3} and \eqref{eq4}.

 (4) It remains to show \eqref{be}. As in \pref{le:assiii12}, we just have to prove $\alpha r > 2 \beta$.  To this end, we consider
\begin{align*}
    r f_2'' (r) - ( f_2'(r) - f_2' (0) )
    = \frac{r + \sqrt{r} \sin \sqrt{r} + 4 \cos \sqrt{r} - 4}{2r}.
\end{align*}
One can check that $r + \sqrt{r} \sin \sqrt{r} + 4 \cos \sqrt{r} - 4>0$ for $r > 0$. Note that $r f_1'' (r) - ( f_1'(r) - f_1' (0) ) > 0$. It follows that $r f'' (r) - ( f'(r) - f' (0) ) > 0$ for $r > 0$ which further implies $\alpha r > 2 \beta$.

\end{example}

\begin{remark}
    From Example \ref{ex2}, we also find that $f_2\in \dx_1\setminus\dx_\8$. Since $f_2'$ is not monotone, $f_2$ clearly violates Assumption \ref{assumption3}.

    In the isotropic case, it was shown in \cite[Section 3.3]{CS18} that one can use GOE in the Kac--Rice representation if and only if $\kappa^2:=\frac{B_N'(0)^2}{B_N''(0)}<1$. In particular, we have $\kappa^2<1$ if $B(r^2)$ is positive definite on $\rz^N$ for all $N\in\nz$. Consider $B(r)=1-p+p\Lambda_2(\sqrt{r})$ for $0<p<\frac12$. One can check directly $\kappa^2<1$ but $B(r)$ is not completely monotone. It follows from Schoenberg's theorem \cite{Sch38} that $B(r^2)$ is not positive definite on $\rz^d$ for $d$ large enough. 

    The above discussion shows that for locally isotropic Gaussian random fields the class $\dx_\8$ is sufficient for the GOE representation but not necessary.
\end{remark}

\begin{acknowledgements}
We are grateful to the anonymous referees for careful reading and many constructive suggestions which have significantly improved the paper. 
\end{acknowledgements}

\bibliographystyle{imsart-number}
\bibliography{reference}
\end{document}